\newcommand{\ri}{\mbox{$\rm i$}}
\newcommand{\rd}{\mbox{$\rm d$}}
\DeclareMathOperator{\tr}{tr}
\newcommand{\rem}[1]{}
\def\thefigure{\thesection.\@arabic\c@figure}
\def\fps@figure{h, t}
\def\thetable{\thesection.\@arabic\c@table}
\def\fps@table{h, t}
\begin{document}

\newtheorem{theorem}{Theorem}[section]
\newtheorem{definition}[theorem]{Definition}
\newtheorem{lemma}[theorem]{Lemma}
\newtheorem{remark}[theorem]{Remark}
\newtheorem{proposition}[theorem]{Proposition}
\newtheorem{corollary}[theorem]{Corollary}
\newtheorem{example}[theorem]{Example}

\def\below#1#2{\mathrel{\mathop{#1}\limits_{#2}}}



\title{
Inexact trajectory planning and inverse problems\\ in the Hamilton--Pontryagin framework}

\author{
Christopher L. Burnett$^{1}$, Darryl D. Holm$^{1}$, David M. Meier$^{1}$
}
{\addtocounter{footnote}{1} 
\footnotetext{Department of Mathematics, Imperial College, London SW7 2AZ, UK. 
\texttt{c.burnett@imperial.ac.uk, d.holm@ic.ac.uk, d.meier09@imperial.ac.uk}
}%
%
\date{}

\maketitle

\makeatother
\maketitle


%

\begin{abstract} 

We study a trajectory-planning problem whose solution path evolves by means of a Lie group action and passes near a designated set of target positions at particular times. This is a higher-order variational problem in optimal control, motivated by potential applications in computational anatomy and quantum control. Reduction by symmetry in such problems naturally summons methods from Lie group theory and Riemannian geometry. 
A geometrically illuminating form of the Euler--Lagrange equations is obtained from a higher-order Hamilton--Pontryagin variational formulation. In this context, the previously known node equations are recovered with a new interpretation as Legendre--Ostrogradsky momenta possessing certain conservation properties. Three example applications are discussed as well as a numerical integration scheme that follows naturally from the Hamilton--Pontryagin principle and preserves the geometric properties of the continuous-time solution.
\end{abstract}
\newpage
\tableofcontents


\section{Introduction}
\paragraph{The purpose of this paper.} This paper is concerned with the analysis of a class of higher-order trajectory planning problems that are important in a wide range of contexts, ranging from computational anatomy to quantum control, both of which are discussed in this paper. The problem in its abstract formulation consists of finding an optimal curve $g(t)$ in a Lie group $G$ that generates a curve $q(t) = g(t)Q_0$ in an object manifold $Q$, such that $q(t)$ passes near a set of fixed \emph{target points} at given \emph{node times}. Precise definitions of optimality and proximity will be provided below. 

In the higher-order variational formulation appropriate for such problems, the Euler--Lagrange equations split into a set of Euler--Poincar\'e equations that hold on the open time intervals between the nodes, and a set of \emph{node equations} that describe how to pass from one open interval to the next. The primary purpose of this paper is to develop a new geometric understanding of the node equations in terms of conjugate momenta. Continuing from this, we  develop a numerical algorithm for the trajectory planning problem that respects the geometric properties exhibited by the continuous-time solution.
\subsection{Background and problem formulation}
The problem treated in this paper fits into a classical type of problem in control theory called \emph{trajectory planning}, or \emph{interpolation by variational curves}. The task in this type of problem is to find an optimal curve that interpolates through a given set of points (or configurations) lying in some manifold, specific to the application one has in mind. Such trajectory planning problems are relevant in numerous applications, for example, in aeronautics, robotics, computer-aided design, air traffic control and more recently, computational anatomy. Some trajectory planning applications require the optimal trajectories to possess a certain degree of smoothness. This requirement summons variational principles that depend on higher-order derivatives of the interpolation path, such as \emph{acceleration} (rate of change of velocity) or \emph{jerk} (rate of change of acceleration) etc. Properties of such higher-order variational principles have been widely studied, one of the earlier references being \cite{deLRo1985}, where an intrinsic formulation in terms of higher-order tangent bundles was given. \cite{PR2011} contains a literature overview concerned both with mechanics and field theories, and some further recent developments are given in \cite{Gay-BalmazEtAl2010, Gay-BalmazEtAl2011HOLPHP, CodD2011}. 

In some applications, for example in computational anatomy \cite{Gay-BalmazEtAl2010} and quantum control \cite{BrHoMe2012}, the optimal trajectory is generated via a group action. That is, a curve $g(t)$ in a Lie group $G$ acts on a point $Q_0$ in an \emph{object manifold} and generates a curve $q(t) = g(t) Q_0$, that passes through a sequence of target points at prescribed times. In some instances it is desirable to relax the target constraints in such a way that the optimal curve does not exactly pass through the target points, but still passes \emph{near} them at the prescribed times. This may be achieved by including a \emph{soft constraint} in the cost functional, that is, a term penalizing the discrepancy between the trajectory $q(t)$ and the targets. This leads to cost functionals of the following type,
\begin{equation}\label{Eq_1}
  S = \int \ell(\xi, \dot{\xi}, \ldots, \xi^{(k-1)}) \, \mathrm{d}t + \frac{1}{2\sigma^2} \sum_i d^2(q(t_i), q_i) .
\end{equation}
Here, $\xi^{(j)}$ are the $j$-th time derivatives of a curve $\xi(t)$ in the Lie algebra (the tangent space at the identity element of the Lie group) that integrates to the curve $g(t)$, which in turn produces the trajectory in the object manifold according to $q(t) = g(t) Q_0$. The first part of the cost is the integral over a Lagrangian $\ell$ and is associated with the curve on the group. The second part sums up the squares of the distances $d$ between the curve $q(t)$ and the target points $q_i$, at the prescribed times $t_i$. The \emph{tolerance parameter} $\sigma$ may be adjusted to suitably weight the two parts. In computational anatomy for example, a diffeomorphism group acts by transforming a medical image (or sub-structures thereof, such as  points of interest, fibres, or surfaces).  See \cite{MillerARBE2002} for an overview. In this context the prescribed target images (or sub-structures) may not be diffeomorphically related to the initial one. That is, the initial and target configurations may lie on different group orbits. In general one expects therefore that exact matching may not be possible and works instead with a soft constraint. Two of the authors have recently studied a problem of similar nature in quantum control, see \cite{BrHoMe2012}. There one considers the group of unitary matrices acting on quantum state space, with the goal of finding the optimal experimental manipulation of the system such that the evolution of an initial state passes near a sequence of given target states. The cost functional is directly related to the required amount of change in the experimental apparatus over time. The introduction of a soft constraint is appropriate in this problem even though in this case the group action is transitive, since by increasing the tolerance parameter $\sigma$ optimal trajectories may be achieved at a lower cost. 

In a more general sense such trajectory planning problems can be thought of as \emph{inverse problems}, where the  data points $q_i \in Q$ have been determined experimentally at the times $t_i$ and one seeks the corresponding curve $g(t)$ in configuration space $G$. In this context a natural choice of the tolerance parameter $\sigma$ would be a measure of the uncertainty inherent in the experiment, such as standard deviation. The Lagrangian $\ell$ represents a modeling choice and is specific to the application one has in mind. 

This paper is concerned with trajectory planning problems of the above type. It was found in \cite{Gay-BalmazEtAl2010} that the Euler--Lagrange equations split into higher-order Euler--Poincar\'e equations, which hold on the open intervals between node times, and a set of \emph{node equations} that describe the passage from one open interval to the next. These node equations describe the continuity properties of a set of quantities involving derivatives of various orders of the Lagrangian. As we will see in examples, a natural choice of Lagrangian $\ell$ leads to \emph{Riemannian cubics} and their higher-order generalizations. This class of curves was introduced in \cite{NoHePa1989} and has since been studied in a series of papers including  \cite{CrSL1995, CaSLCr1995, CaSLCr2001, No2004, No2006, splinesanalyse}. Riemannian cubics appear in a variety of applications, for example in the quantum control problem mentioned above, but also in computer graphics, robotics and spacecraft control \cite{PaRa1997, ZeKuCr1998, HuBl2004, TrVi2010}. 
\paragraph{Plan of the paper.}
In Section \ref{Sec_Geometry_of_TP} we shall rederive the Euler--Lagrange equations for the higher-order variational problem by using Lagrange multipliers in a generalization of the symmetry reduced Hamilton--Pontryagin principle of geometric mechanics. In this approach, the derivation of the Euler--Lagrange equations simplifies considerably and a new geometric interpretation of the node equations emerges. Namely, they describe the evolution of Legendre--Ostrogradsky momenta across the nodes, in which the highest-order momentum experiences a discontinuous jump related to the distance between the curve in the object manifold and the target points. The discontinuity can be understood in terms of a momentarily broken symmetry at the node times. However, if the object manifold is isotropic with respect to a subgroup action then a residual symmetry remains. By Noether's theorem, this residual symmetry leads to a conservation law across node times.

In Section \ref{Sec-Applications} we discuss a number of applications, including rigid body splines, macromolecular configurations and quantum splines. Section \ref{Sec_Geom_Dis} is concerned with the numerical solution of the inexact trajectory planning problem. More precisely, Section 4 describes a geometric discretization of the higher-order Hamilton--Pontryagin principle for inexact trajectory planning, similar to the approach given in \cite{BRMa09} for first-order systems. Our main motivation for the development of a geometric integrator is the exact momentum behavior of the discrete solution. This leads in turn to a dimensionality reduction of the search space in the numerical optimization. 
\section{Geometry of the trajectory planning problem}\label{Sec_Geometry_of_TP}
We start with the statement of the problem considered here. One aims at steering from an initial point $Q_0$ in some \emph{object manifold} $Q$ along an optimal trajectory $q(t)$ that evolves via the action of a Lie group $G$. That is, $q(t) = g(t) Q_0$, where the right hand side denotes the action of $g(t)$ on $Q_0$ and the curve $q(t)$ lies in the $G$-orbit of $Q_0$.

The optimality condition is given in terms of a function $\ell: k\mathfrak{g} \to \mathds{R}$ defined on the $k$-fold Cartesian product $k\mathfrak{g}$ of the Lie algebra $\mathfrak{g}$, which measures the cost of the transformation $g(t)$, and a distance function $d: Q\times Q \to \mathds{R}$. As we shall see, the integer $k$ determines the degree of smoothness of solution curves. The optimal curve $q(t)$ is required to pass near prescribed target points $Q_{t_i}$ at prescribed \emph{node times} $t_i$ for $i= 1, \ldots, l$. This is formalized by including a squared distance term $d^2(g(t_i) Q_0, Q_{t_i})$ in the cost functional, for each $i$. Thus, the cost functional $S: \mathcal{C}(\mathfrak{g}) \to \mathds{R}$, where $\mathcal{C}(\mathfrak{g})$ a suitable space of $\mathfrak{g}$-valued curves (see below) is defined by
\begin{align}
  S[\xi]:= \int_0^{t_l} \ell(\xi, \ldots, \xi^{(k-1)})\, \mathrm{d}t + \frac{1}{2\sigma^2} \sum_{i=1}^l d^2(g(t_i)Q_0, Q_{t_i}). \label{Cost_original}
\end{align}
The notation $\xi^{(j)}$ is shorthand for ${ d^{j}\xi}/{dt^{j}}$, the quantity $\sigma$ is a \emph{tolerance parameter}, and the curve $g(t)$ originates at the identity $g(0) = e$ and satisfies $\dot{g} =\frac{d}{d \varepsilon}\big|_{\varepsilon = 0} \exp(\varepsilon \xi)g$. We write $R_g$ for multiplication by $g$ from the right and $TR_g$ for its differential, thus $\dot{g} = TR_g \xi$. Variations are considered in the space $\mathcal{C}(\mathfrak{g})$ consisting of curves $\xi(t): [0, t_l] \to \mathfrak{g}$ whose restrictions to open intervals $(t_{i-1}, t_i)$ for $i = 1, \ldots, l$  are $C^{2k-2}$ and whose $j$-th derivatives $\xi^{(j)}$ are continuous on $[0, t_l]$ for $j = 0, \ldots, k-2$. We also assume that initial values of $\xi^{(j)}(0)$ for $j = 0, \ldots, k-2$, are given.

This type of trajectory planning problem is familiar, for example, from image registration in computational anatomy, where one typically thinks of $Q_0$ as a template shape being deformed by a curve of diffeomorphisms $g(t)$, in turn generated by the time-dependent vector field $\xi(t)$ \cite{Younes2010}.  At times $t_i$ the resulting curve in shape space passes near the given target shapes $Q_{t_i}$, the parameter $\sigma$ determining the proximity of the passage. In this case, the Lie group $G$ of diffeomorphisms is infinite dimensional. However, in the present paper we will restrict ourselves to the case of finite-dimensional Lie groups and object manifolds. A natural finite-dimensional instance for illustrating these ideas arises in quantum control \cite{BrHoMe2012}, where quantum state vectors evolve under the action of the unitary group. The generator curve $\xi(t)$ in this case corresponds to the Hamiltonian operator, which is controlled in experiments. 
\subsection{Euler--Lagrange equations via Lagrange multipliers}
The Euler--Lagrange equations characterize solutions to Hamilton's principle $\delta S = 0$ and were derived in \cite{Gay-BalmazEtAl2010}. The equations split into a set of higher-order Euler--Poincar\'e equations on the open time intervals between the node times and a number of \emph{node equations} describing how the solution evolves across the nodes. In previous formulations of this problem, one must find the variation $\delta g(t_i)$ that is produced by a (time dependent) variation $\delta \xi(t)$. This can be facilitated by taking advantage of Lagrange multipliers in an equivalent variational formulation that we describe now. As this paper demonstrates, the new approach also provides a geometric interpretation of the node equations and furthermore suggests a geometric numerical procedure for the solution of the problem , see Section \ref{Sec_Geom_Dis} below.

The method of Lagrange multipliers involves enlarging the space on which the dynamics happen. We define the cost functional $S$ on some space of curves $\mathcal{C}(G\times k\mathfrak{g} \times k\mathfrak{g}^*)$,
\begin{align}
  S[g, \xi^0, &\ldots, \xi^{k-1}, \mu^0, \ldots, \mu^{k-1}] := \int_0^{t_l}\bigg[ \ell(\xi^0, \xi^1, \ldots, \xi^{k-1}) + \langle\mu^0,TR_{g^{-1}} \dot{g} - \xi^0 \rangle \notag \\ &\quad + \sum_{r = 1}^{k-1}\langle \mu^r, \dot{\xi}^{r-1} - \xi^r\rangle\bigg] \, \mathrm{d}t + \frac{1}{2\sigma^2} \sum_{i = 1}^{l} d^2(g(t_i)Q_0, Q_{t_i}). \label{Cost_LM}
\end{align}
Again some technical assumptions about the space of curves are needed. Namely the curves are $C^1$ when restricted to the  open intervals $(t_{i-1}, t_i)$ for $i = 1, \ldots, l$, and $g, \xi^0, \ldots, \xi^{k-2}$ are continuous on $[0, t_l]$. We also assume $g(0) = e$ and given initial values $\xi^j(0) = \xi_0^j$, for $j = 0, \ldots, k-2$. 

Before we take variations of $S$ it is useful to introduce the \emph{cotangent lift momentum map} $J^Q: T^*Q \to \mathfrak{g}^*$ associated with the action of $G$ on $Q$ (see \cite{MaRa03}, Chapter 11, for more information). This map is defined to satisfy
\begin{align}\label{Moma}
  \langle\alpha_q, \xi_Q(q)\rangle = \langle J^Q(\alpha_q), \xi\rangle, \quad \mbox{for any } \alpha_q \in T^*Q, \, \xi \in \mathfrak{g} \,,
\end{align}
where we have used the notation $\xi_Q(q) := \left.\frac{d}{d \varepsilon}\right|_{\varepsilon = 0} e^{\varepsilon \xi}q\,$ and $\langle\,.\,,.\,\rangle$ for the respective duality pairings.  We also introduce the shorthand $\mathrm{d}_1d(q_1, q_2) \in T_{q_1}^*Q$ to denote the exterior derivative of the distance function $d$ with respect to the first entry, and $\mathrm{d}_1d(t_i):= \mathrm{d}_1d(g(t_i)Q_0, Q_{t_i})$. Integrating by parts and using \eqref{Moma} we obtain
\begin{align}
  \delta S &= \int_0^{t_l}\bigg[ \langle -\dot{\mu}^0 - \operatorname{ad}^*_{\xi^0} \mu^0, \eta\rangle + \sum_{r = 0}^{k-2} \left< \frac{\delta \ell}{\delta \xi^r} - \mu^r - \dot{\mu}^{r+1}, \delta \xi^r\right> + \left< \frac{\delta \ell}{\delta \xi^{k-1}} - \mu^{k-1}, \delta \xi^{k-1}\right> \notag \\
&\quad + \left<\delta \mu^0, TR_{g^{-1}}\dot{g} - \xi^0\right> + \sum_{r=1}^{k-1}\left<\delta \mu^r, \dot{\xi}^{r-1}- \xi^r\right>  \bigg]\, \mathrm{d} t \notag \\
&\quad  + \left<\mu^0(t_l) + \frac{d(t_l)}{\sigma^2}J^Q(\mathrm{d}_1d(t_l)), \eta(t_l)\right> + \sum_{r=0}^{k-2} \left<\mu^{r+1}(t_l), \delta \xi^{r}(t_l)\right> \label{Variations} \\
&\quad +\sum_{s = 1}^{l-1} \bigg[ \left<\mu^0(t_s^-) - \mu^0(t_s^+) + \frac{d(t_s)}{\sigma^2} J^Q(\mathrm{d}_1d(t_s), \eta(t_s)\right>  + \sum_{r=0}^{k-2} \left<\mu^{r+1}(t_s^-) - \mu^{r+1}(t_s^+), \delta \xi^{r}(t_s)\right> \bigg]  \notag \\
&\quad - \left<\mu^0(0) - \frac{d(0)}{\sigma^2}J^Q(\mathrm{d}_1d(0)), \eta(0)\right> - \sum_{r=0}^{k-2} \left<\mu^{r+1}(0), \delta \xi^{r}(0)\right>, \notag
\end{align}
where we set $\eta := TR_{g^{-1}}\delta g$ and defined $\mu^r(t_s^-) := \operatorname{lim}_{t\uparrow t_s}\mu^r(t)$ as well as  $\mu^r(t_s^+) := \operatorname{lim}_{t\downarrow t_s}\mu^r(t)$. The last line above could have been omitted since by assumption $\eta(0) =0$ and $\delta \xi^j =0$ for $j = 0, \ldots, k-2$. 

We can now read off the Euler--Lagrange equations. On the one hand, for $t$ in any of the open intervals $(t_i, t_{i+1})$, $i = 0, \ldots, l-1$, we have
\begin{align}
  &\dot{\mu}^0 + \operatorname{ad}^*_{\xi^0} \mu^0 = 0, \label{open_1}\\
& TR_{g^{-1}}\dot{g} - \xi^0 = 0, \label{open_2} \\
& \dot{\xi}^{r-1} - \xi^r = 0, \qquad (r = 1, \ldots, k-1) \label{open_3}\\
  &\dot{\mu}^r + \mu^{r-1} - \frac{\delta \ell}{\delta \xi^{r-1}} = 0, \qquad (r = 1, \ldots, k-1) \label{open_4} \\
&\mu^{k-1} - \frac{\delta \ell}{\delta \xi^{k-1}} = 0.   \label{open_5}
\end{align}
On the other hand, the \emph{node equations} are given by
\begin{align}
&\mu^0(t_s^-) - \mu^0(t_s^+) + \frac{d(t_s)}{\sigma^2} J^Q(\mathrm{d}_1d(t_s)) = 0, \qquad (s = 1, \ldots, l-1) \label{node_1} \\
  &\mu^{r}(t_s^-) - \mu^{r}(t_s^+), \qquad (r = 1, \ldots, k-1;\, s = 1, \ldots, l-1) \label{node_2} \\
  &\mu^0(t_l) + \frac{d(t_l)}{\sigma^2}J^Q(\mathrm{d}_1d(t_l)) = 0, \label{node_3}\\
  &\mu^r(t_l) = 0. \qquad (r = 1, \ldots, k-1) \label{node_4}  
\end{align}
\begin{remark}\label{Rem_1}
There are $4$ versions of the action functional, which are all relevant in applications. The one above can be called the \emph{left-action, right-reduction} version since $g(t)$ acts on $Q_0$ from the left, while $\xi^0$ is the right-reduced velocity $\xi^0 = TR_{g^{-1}}\dot{g}$  (see Section \ref{Sec_EP_eqns} below, for more details on reduced variables). There are the following three other cases.

\begin{enumerate}[(1)]
\item
The \emph{right-action, right-reduction} case with action functional
\begin{align}
  S[g, \xi^0, &\ldots, \xi^{k-1}, \mu^0, \ldots, \mu^{k-1}] := \int_0^{t_l}\bigg[ \ell(\xi^0, \xi^1, \ldots, \xi^{k-1}) + \langle\mu^0, TR_{g^{-1}}\dot{g} - \xi^0 \rangle \notag \\ &\quad + \sum_{r = 1}^{k-1}\langle \mu^r, \dot{\xi}^{r-1} - \xi^r\rangle\bigg] \, \mathrm{d}t + \frac{1}{2\sigma^2} \sum_{i = 1}^{l} d^2(g(t_i)^{-1}Q_0, Q_{t_i}). \notag
\end{align}
The variation of the penalty term in this case changes according to
\begin{align}
  \frac{1}{2} \delta d^2(g(t_i)^{-1}Q_0, Q_{t_i}) = -d(t_i) \left<\operatorname{Ad}^*_{g(t_i)^{-1}} J({\rm d}_1 d(t_i)), \eta(t_i)\right>. \notag
\end{align}
This means that \eqref{node_1} and \eqref{node_3} are replaced by
\begin{align}
  &\mu^0(t_s^-) - \mu^0(t_s^+) - \frac{d(t_s)}{\sigma^2} \operatorname{Ad}^*_{g(t_s)^{-1}} J^Q(\mathrm{d}_1d(t_s)) = 0, \qquad (s = 1, \ldots, l-1)\notag \\
 &\mu^0(t_l) -  \frac{d(t_l)}{\sigma^2}\operatorname{Ad}_{g(t_l)^{-1}}^*J^Q(\mathrm{d}_1d(t_l)) = 0.\notag
\end{align}
\item
In the \emph{right-action, left-reduction} case, the action functional is
\begin{align}
   S[G, \Xi^0, &\ldots, \Xi^{k-1}, m^0, \ldots, m^{k-1}] := \int_0^{t_l}\bigg[ l(\Xi^0, \Xi^1, \ldots, \Xi^{k-1}) + \langle m^0, TL_{G^{-1}}\dot{G} - \Xi^0 \rangle \notag \\ &\quad + \sum_{r = 1}^{k-1}\langle m^r, \dot{\Xi}^{r-1} - \Xi^r\rangle\bigg] \, \mathrm{d}t + \frac{1}{2\sigma^2} \sum_{i = 1}^{l} d^2(G(t_i)^{-1}Q_0, Q_{t_i}), \notag
\end{align}
where we wrote $L_G$ for multiplication by $G$ from the left and $TL_G$ for its differential. However, this is equivalent to the left-action, right-reduction case by identifying
\begin{align}
  G = g^{-1},\quad \Xi^0 = -\xi^0, &\ldots, \Xi^{k-1} = -\xi^{k-1}, \quad m^0 = -\mu^0, \ldots, m^{k-1} = -\mu^{k-1}
\end{align}
and setting $\ell = l \circ \kappa$, where $\kappa: k\mathfrak{g} \to k\mathfrak{g}$ is multiplication by $-1$. 
\item
By the same token the \emph{left-action, left-reduction} case with action functional
\begin{align}
   S[G, \Xi^0, &\ldots, \Xi^{k-1}, m^0, \ldots, m^{k-1}] := \int_0^{t_l}\bigg[ l(\Xi^0, \Xi^1, \ldots, \Xi^{k-1}) + \langle m^0, TL_{G^{-1}}\dot{G} - \Xi^0 \rangle \notag \\ &\quad + \sum_{r = 1}^{k-1}\langle m^r, \dot{\Xi}^{r-1} - \Xi^r\rangle\bigg] \, \mathrm{d}t + \frac{1}{2\sigma^2} \sum_{i = 1}^{l} d^2(G(t_i)Q_0, Q_{t_i}). \notag
\end{align}
can be mapped to the right-action, right-reduction case.
\end{enumerate}
In the analysis that follows we will largely restrict ourselves to the left-action, right-reduction case. Anything we say can be transferred to the remaining three cases by applying the modifications listed above.
\end{remark}
\subsection{Euler--Poincar\'e equations} \label{Sec_EP_eqns} From \eqref{open_1}--\eqref{open_5} it follows that  on open intervals $(t_i, t_{i+1})$, $i = 0, \ldots, l-1$,
\begin{align}
  \left(\frac{d}{dt} + \operatorname{ad}^*_{\xi^0}\right) \sum_{j=0}^{k-1} (-1)^j \frac{d^j}{dt^j} \frac{\delta \ell}{\delta \xi^j} = 0. \label{EP_TP}
\end{align}
This is a \emph{$k$-th order Euler--Poincar\'e equation} for a system that exhibits right-invariance. This type of equation was derived in \cite{Gay-BalmazEtAl2010} from a variational perspective and in \cite{CodD2011} from the Hamiltonian one. We now explain its appearance in the inexact trajectory planning problem from the viewpoint of invariant Lagrangians, starting with some necessary definitions that can be found in \cite{CeMaRa2001} and \cite{Gay-BalmazEtAl2011HOLPHP}.

The \emph{$k$-th order tangent bundle} $ \tau_G^{(k)}: T^{(k)}G \to G$ is defined as a set of equivalence classes of curves as follows: Two curves $g_i(t) \in G, i = 1, 2$, are \emph{equivalent}, if and only if their time derivatives at $t=0$ up to order $k$ coincide in any local chart. That is, $g_1^{(j)}(0) = g_2^{(j)}(0)$, for $0 \leq j \leq k$. The equivalence class of a curve $g(t)$ is denoted by $[g]_{g(0)}^{(k)}$, or formally as $(g(0), \dot{g}(0), \ldots, g^{(k)}(0))$. The set of all equivalence classes of curves emanating from $g_0 \in G$ is written as $T_{g_0}^{(k)}G$. Then $T^{(k)}G := \bigcup_{g \in G} T_{g}^{(k)}G$ is a fibre bundle over $G$ with projection map $\tau^{(k)}_G: [g]_{g(0)}^{(k)} \mapsto g(0)$. Note that a curve $g(t)$ defines, at each time $t$ in its domain, an element $[g]_{g(t)}^{(k)}:= [h]_{h(0)}^{(k)}$ by setting $h(s):= g(t+s)$.

It is convenient to represent $T^{(k)}G$ using the \emph{trivialization map} that makes use of the right group multiplication (analogous constructions exist using the left multiplication map). Let $g(t)$ be a representative of $[g]_{g(0)}^{(k)}$ and define $\xi(t) :=TR_{g^{-1}} \dot{g}(t)$ . The trivialization map $T^{(k)}G \to G \times k\mathfrak{g}$ is given by
\begin{align}
  \alpha_k: [g]_{g(0)}^{(k)} \mapsto \big(g(0), \xi(0), \dot{\xi(0)}, \ldots, \xi^{(k-1)}(0)\big). \notag
\end{align}
The \emph{reduction map} $\beta_k: T^{(k)}G \to k\mathfrak{g}$ is obtained by omitting the first entry.

It is a well known (see, for example \cite{MaRa03}, Chapter 13) that the first order $(k=1)$ Euler--Poincar\'e equation appears when the Euler--Lagrange equation for a Lagrangian $TG \to \mathds{R}$ with symmetry is written in terms of the reduced velocity vector $\xi(t) =TR_{g^{-1}(t)} \dot{g}(t)$ . The higher-order Euler--Poincar\'e equation appears in a similar fashion when computing the Euler--Lagrange equations for an invariant $k$-th order Lagrangian $L: T^{(k)}G \to \mathds{R}$. More precisely, $L$ is called \emph{right-invariant} if $L\big([g]_{g(0)}^{(k)}\big) = L\big([gh]_{g(0)h}^{(k)}\big)$. The definition of \emph{left-invariance} follows analogously. Let $L$ be a right-invariant Lagrangian and consider Hamilton's principle, $\delta \mathcal{J} = 0$, for
\begin{align} \label{Standard_action}
 \mathcal{J}[g] =  \int_a^b L(g(t), \dot{g}(t), \ldots, g^{(k)}(t)) \, {\rm d}t, 
\end{align}
where variations are taken with respect to fixed end points up to order $k-1$, that is, $\delta g^{(j)}(a) = \delta g^{(j)}(b) = 0$, for $j = 0, \ldots k-1$, in any local chart. The Euler--Lagrange equations can be written in terms of the right-reduced velocity vector, which leads to the \emph{$k$-th order Euler--Poincar\'e equations} \cite{Gay-BalmazEtAl2010}
  \begin{align}
  \left(\frac{d}{dt} + \operatorname{ad}^*_{\xi}\right) \sum_{j=0}^{k-1} (-1)^j \frac{d^j}{dt^j} \frac{\delta \ell}{\delta \xi^{(j)}} = 0, \label{EP_standard}
\end{align}
with \emph{reduced Lagrangian} $\ell: k\mathfrak{g} \to \mathds{R}$,
\[ \ell\big(\xi(0), \dot{\xi}(0), \ldots, \xi^{(k-1)}(0)\big) = L \circ \alpha_k^{-1}\big(e, \xi(0), \dot{\xi}(0), \ldots, \xi^{(k-1)}(0)\big).
\]
It is now straightforward to see from the viewpoint of invariant Lagrangians why the Euler--Poincar\'e equation \eqref{EP_TP} must characterize optimal curves in the trajectory planning problem on open time intervals. Indeed, fix $0 \leq i \leq l$ and suppose $\xi(t)$ is a local extremum of \eqref{Cost_original} with integral curve $g(t)$. If $\xi|_{(t_i, t_{i+1})}$ is not a solution to the $k$-th order Euler--Poincar\'e equation  one can find a variation $g_{\varepsilon}(t)$ keeping fixed $g(t_i)$ and $g(t_{i+1})$, such that $\delta \int_{t_i}^{t_{i+1}} L(g, \dot{g}, \ldots, g^{(k)}) \, \mathrm{d} t \neq 0$ with Lagrangian $L := \ell\circ \beta_k$. By consequence,  $\xi_{\varepsilon}(t) :=TR_{ g_{\varepsilon}^{-1}(t)} \dot{g}_{\varepsilon}(t)$ is a variation of $\xi$ such that $\delta S \neq 0$ for $S$ as in \eqref{Cost_original}, a contradiction.
\subsection{Geometry of multipliers} \label{HOHP_and_geometry_of_mult} Furthermore we observe from \eqref{open_1}--\eqref{open_5} that  on open intervals $(t_i, t_{i+1})$, $i = 0, \ldots, l-1$,
\begin{align}
  \mu^r = \sum_{j=0}^{k-r-1} (-1)^j \frac{d^j}{dt^j} \frac{\delta \ell}{\delta \xi^{r+j}} \qquad (r = 0, \ldots, k-1). \label{Leg-Ost_TP}
\end{align}
In order to discuss the geometric meaning of these identities, we recall from above that the trajectory planning problem \eqref{Cost_LM} on open intervals reduces to a problem of the type $\delta \mathcal{J} = 0$ with $\mathcal{J}$ of the form \eqref{Standard_action}. In particular, equations \eqref{open_1}--\eqref{open_5} and therefore \eqref{Leg-Ost_TP} are obtained by taking suitably constrained variations of
\begin{align}
  \mathcal{J}[g, \xi^0,& \ldots, \xi^{k-1}, \mu^0, \ldots, \mu^{k-1}] := \int_a^{b}\bigg[ \ell(\xi^0, \xi^1, \ldots, \xi^{k-1}) + \langle\mu^0,TR_{g^{-1}} \dot{g} - \xi^0 \rangle \notag \\ &\quad + \sum_{r = 1}^{k-1}\langle \mu^r, \dot{\xi}^{r-1} - \xi^r\rangle\bigg] \mathrm{d}t \label{HP_principle}
\end{align}
This variational principle is a higher-order generalization of the \emph{reduced Hamilton--Pontryagin principle} of first order mechanics. In first order mechanics, this principle provides a unified treatment of the Lagrangian and Hamiltonian descriptions of invariant mechanical systems on Lie groups (see \cite{YoMa06} for a detailed discussion\footnote{There is a close connection between the Hamilton--Pontryagin principle and Dirac structures, an aspect we do not enter into in the present paper. See \cite{YoMa06} for more information.}). In particular, the \emph{Legendre transform} connecting the two descriptions is revealed by the variational calculus. This remains true for higher-order mechanics. Indeed, \eqref{Leg-Ost_TP} can be recognized to be the reduced Legendre transform that appears in \cite{Gay-BalmazEtAl2010,Gay-BalmazEtAl2011HOLPHP}. While we found \eqref{Leg-Ost_TP} from a variational approach, these references take as starting point \cite{deLRo1985}, where a coordinate free description of the higher-order Legendre transform on manifolds was given. We briefly review this approach here.  

The Legendre transform of higher-order mechanics, given in \cite{deLRo1985}, is a map $\mbox{Leg}: T^{(2k-1)}G \to T^*(T^{(k-1)})G$. If Leg is a diffeomorphism (that is, $L$ is \emph{hyperregular}) it connects the Lagrangian and Hamiltonian descriptions  just as in the first order case. With respect to the right-trivializations
\begin{align}
  T^{(2k-1)}G \cong G \times (2k-2)\mathfrak{g}, \qquad T^*(T^{(k-1)})\cong G\times (k-2)\mathfrak{g} \times (k-1)\mathfrak{g}^* \label{Trivs}
\end{align}
it is given as \cite{Gay-BalmazEtAl2011HOLPHP}
\begin{align}
  \mbox{Leg:}& \,(g, \xi^0, \ldots, \xi^{2k-2}) \mapsto (g, \xi^0, \ldots, \xi^{k-2}, \mu^0, \ldots, \mu^{k-1}), \notag \\
&\mbox{where} \quad \mu^r =  \sum_{j=0}^{k-r-1} (-1)^j \frac{d^j}{dt^j} \frac{\delta \ell}{\delta \xi^{r+j}} \quad (r = 0, \ldots k-1).  \notag
\end{align}
The same equations were seen in \eqref{Leg-Ost_TP} to emerge from the Hamilton--Pontryagin principle \eqref{HP_principle}.  This means that, as for first order, the higher-order Hamilton--Pontryagin principle contains both Lagrangian and Hamiltonian descriptions of higher-order mechanics and provides a unified framework for both views. To obtain the Lagrangian description one may eliminate $\mu^0, \ldots, \mu^{k-1}$ from \eqref{open_1}--\eqref{open_5} using \eqref{Leg-Ost_TP}. The resulting equations are the trivialized flow equations of the \emph{Lagrangian vector field}, an element of $\mathfrak{X}(T^{(2k-1)}G)$ \cite{deLRo1985}. On the other hand, if \eqref{open_5} can be solved for $\xi^{k-1}$ (this is the case, for example, when $L$ is hyperregular) then \eqref{open_1}--\eqref{open_4} are the trivialized flow equations of the \emph{Hamiltonian vector field} $X_H \in \mathfrak{X}(T^*(T^{(k-1)})$, which solves \cite{deLRo1985}
\begin{align}
  i_{X_H}\omega = \mathrm{d}H,  \label{HamVF}
\end{align}
where $\omega$ is the canonical symplectic form on $T^*(T^{(k-1)}G)$ and $H: T^*(T^{(k-1)}G) \to \mathds{R}$ is given as
\begin{align}
  H(g, \xi^0, \ldots, \xi^{k-2}, \mu^0, \ldots \mu^{k-1}) = \sum_{r = 0}^{k-1} \left<\mu^r, \xi^r\right> - \ell(\xi^0, \ldots, \xi^{k-1}) \label{Hamiltonian}
\end{align}
 with respect to the trivialization \eqref{Trivs}. By consequence of \eqref{HamVF} the flow map $F_t: T^*(T^{(k-1)}) \to T^*(T^{(k-1)})$ of the Hamiltonian vector field preserves the symplectic form $\omega$. 

For later reference we point out how this can be seen alternatively from the Hamilton--Pontryagin principle. This is a generalization to higher order of a standard argument (see for example \cite{BRMa09}, Section 3, for the first order case). If we omit end point constraints on the variations of $\mathcal{J}$ in \eqref{HP_principle}, the integration by parts contributes boundary terms to $\delta \mathcal{J}$ (cf. \eqref{Variations}),
\begin{align}
 \delta \mathcal{J} = \int_a^b \cdots\, \mathrm{d}t  + \left. \left<\mu^0, TR_{g^{-1}}\delta g\right>\right|_a^b + \sum_{r = 0}^{k-2} \left.\left<\mu^{r+1}, \delta \xi^r\right>\right|_{a}^b = \int_a^b \cdots\, \mathrm{d}t +  \left.\theta(\delta x)\right|_a^b, \label{Symplectic_arg}
\end{align}
where $\theta$ in the second equality is the canonical one-form on $T^*(T^{(k-1)}G)$ and we defined $\delta x(t)$ to be the curve in $TT^*(T^{(k-1)}G)$ whose trivialization corresponds to the variations $(TR_{g^{-1}}\delta g , \delta \xi^0, \ldots, \delta \xi^{(k-2)}, \delta \mu^0, \ldots, \delta \mu^{(k-1)})$. If we restrict the variations to solution curves of \eqref{open_1}--\eqref{open_5}, we may just as well express $\mathcal{J}$ as a function of initial conditions $\mathcal{J}_{\mbox{initial}}: T^*(T^{(k-1)}G) \to \mathds{R}$. The integral part of \eqref{Symplectic_arg} then vanishes and we obtain 
\[
\delta \mathcal{J}  = \mathrm{d}\mathcal{J}_{\mbox{initial}} (\delta x(a))= (F_{b-a}^*\theta - \theta) (\delta x(a)).
\]
Therefore, $F_{b-a}^*\theta = \theta$, and taking an exterior derivative yields the desired identity $F_{b-a}^*\omega = \omega$.
\subsection{Momentum conservation and Noether's theorem} \label{Section_MCNT} Another important point to notice from \eqref{open_1}--\eqref{open_5} is that $\operatorname{Ad}_{g}^*\mu^0$ is a conserved quantity. Here $\operatorname{Ad}^*$ is the map dual to $\operatorname{Ad}: G \times \mathfrak{g} \to \mathfrak{g}$ given by $(g, \xi) \mapsto \operatorname{Ad}_g\xi:= TL_g TR_{g^{-1}} \xi$. Indeed by \eqref{open_1}
\begin{align}
  \frac{d}{dt} \operatorname{Ad}_g^*\mu^0 = \operatorname{Ad}^*_g (\dot{\mu}^0 + \operatorname{ad}^*_{\xi^0} \mu^0) = 0. \label{Conservation}
\end{align}
In the context of first order Euler--Poincar\'e equations a similar momentum conservation is due to the invariance of the Lagrangian with respect to group multiplication operations. This is an instance of Noether's theorem, which roughly speaking guarantees that the momentum map associated with the action of a symmetry group is preserved (see for example \cite{MaRa03}, Chapter 11). We now show that the situation is similar for \eqref{Conservation}. The right action $R$ of $G$ on itself,
\begin{align}
  R: G \times G, \quad (h, g) \mapsto R_g(h) = hg, \notag
\end{align}
can be lifted to an action on $T^{(k-1)}G$,
\begin{align}
  T^{(k-1)}R: T^{(k-1)}G \times G \to T^{(k-1)}G, \quad \left([h]_{h(0)}^{(k-1)}, g\right) \mapsto T^{(k-1)}R_g\left([h]_{h(0)}^{(k-1)}\right) =  [hg]_{h(0)g}^{(k-1)}. \notag
\end{align}
This action can be lifted to its so-called \emph{cotangent lifted action} (\cite{MaRa03}, Section 12.1)
\begin{align}
  T^*T^{(k-1)}R: T^*(T^{(k-1)}G) \times G \to T^*(T^{(k-1)}G), \notag
\end{align}
given in trivialized form as
\begin{align}
   T^*T^{(k-1)}R_g (h, \xi^0, \ldots \xi^{k-2}, \mu^0, \ldots \mu^{k-1}) = (hg^{-1}, \xi^0, \ldots, \xi^{k-2}, \mu^0, \ldots \mu^{k-1}). \notag
\end{align}
It is apparent that the Hamiltonian \eqref{Hamiltonian} is symmetric with respect to this group action. By Noether's theorem the associated momentum map is conserved. 

What is this momentum map? By appealing to standard formulas (\cite{MaRa03}, Section 12.1) we find that the momentum map $J: T^*(T^{(k-1)}G) \to \mathfrak{g}^*$ is
\begin{align}
  J(g, \xi^0, \ldots \xi^{k-2}, \mu^0, \ldots \mu^{k-1}) = \operatorname{Ad}^*_g \mu^0, \notag
\end{align}
with respect to the trivialization \eqref{Trivs}. The conservation law observed in \eqref{Conservation} thus arises from the right-invariance of the Lagrangian (respectively, the Hamiltonian) via Noether's theorem. 

The conservation law can also be obtained from a variational perspective. This is well known in first order mechanics, and it is also the case in higher-order mechanics. We take a solution of \eqref{open_1}--\eqref{open_5} on the time interval $[a, b]$ and vary it according to $\delta g = TL_g \nu$ for $\nu \in \mathfrak{g}$. For $\mathcal{J}$ as in \eqref{HP_principle} we have (cf. \eqref{Symplectic_arg})
\begin{align}
  \delta \mathcal{J} = 0 = \left.\left<\mu^0, TR_{g^{-1}}TL_g\nu\right>\right|_a^b = \left.\left<\operatorname{Ad}^*_g\mu^0, \nu\right>\right|_a^b. \label{Noether_var}
\end{align}
The same argument holds after replacing the upper boundary $b$ by any $b' \in [a, b]$. Since $\nu$ was arbitrary we conclude that $\operatorname{Ad}^*_g \mu^0$ is conserved along a solution of \eqref{open_1}--\eqref{open_5}.
\subsection{Node equations} The remarks above concerned equations \eqref{open_1}--\eqref{open_5} on the open time intervals between nodes. We now come to the node equations \eqref{node_1}--\eqref{node_4}. These specify the evolution  across node times of the Lagrange multipliers $\mu^r$, which we interpreted above as the reduced \emph{Legendre momenta} of the system. More specifically, the momenta $\mu^r, \, r = 1, \ldots, k-1$ are continuous on $[0, t_l]$, while the $0$-th momentum $\mu^0$ experiences jump discontinuities at the nodes. If the Lagrangian $\ell$ is hyperregular we can conclude that $g \in C^{2k-2}([0, t_l])$, that is, $g$ is $(2k-2)$ times continuously differentiable on $[0, t_l]$. Furthermore, the node equations specify terminal values for the curves $\mu^r, \, r = 1, \ldots, k-1$. 

For $a, \, b \in \mathds{R}$ define $1_{a\leq b}$ to be equal to $1$ if $a\leq b$ and $0$ otherwise. We can now prove the following theorem.
\begin{theorem} \label{Theorem_m0}
  For $t$ in any of the open time intervals $(t_s, t_{s+1})$ as well as for $t \in \{0, t_l\}$, 
  \begin{align}
    \mu^0(t) = -\frac{1}{\sigma^2} \operatorname{Ad}^*_{g(t)^{-1}} \left(\sum_{s = 1}^{l} 1_{t\leq t_s} d(t_s)\operatorname{Ad}^*_{g_{t_s}} J^Q(\mathrm{d}_1d(t_s))\right). \label{Thm_eq}
  \end{align}
\end{theorem}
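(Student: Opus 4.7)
The plan is to exploit the conservation law \eqref{Conservation} together with the node equations \eqref{node_1}--\eqref{node_3}. Define $M(t) := \operatorname{Ad}^*_{g(t)} \mu^0(t)$. On every open interval $(t_s, t_{s+1})$, equation \eqref{Conservation} says $M$ is constant. Moreover, from \eqref{node_1} the jump of $M$ across a node $t_s$ (for $s=1,\ldots,l-1$) is
\begin{align*}
M(t_s^-) - M(t_s^+) = \operatorname{Ad}^*_{g(t_s)}\bigl(\mu^0(t_s^-) - \mu^0(t_s^+)\bigr) = -\frac{d(t_s)}{\sigma^2}\operatorname{Ad}^*_{g(t_s)} J^Q(\mathrm{d}_1 d(t_s)),
\end{align*}
while the terminal condition \eqref{node_3} gives $M(t_l) = -\frac{d(t_l)}{\sigma^2}\operatorname{Ad}^*_{g(t_l)} J^Q(\mathrm{d}_1 d(t_l))$.

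I would then proceed by backward induction on the interval index. On $(t_{l-1}, t_l)$, constancy of $M$ together with left-continuity at $t_l$ (which is guaranteed by the regularity imposed on the curves) yields $M(t) = M(t_l)$. Moving leftward across node $t_{l-1}$ adds the jump term, giving on $(t_{l-2}, t_{l-1})$
\begin{align*}
M(t) = -\frac{1}{\sigma^2}\Bigl[d(t_l)\operatorname{Ad}^*_{g(t_l)} J^Q(\mathrm{d}_1 d(t_l)) + d(t_{l-1})\operatorname{Ad}^*_{g(t_{l-1})} J^Q(\mathrm{d}_1 d(t_{l-1}))\Bigr],
\end{align*}
and iterating this, on $(t_s, t_{s+1})$ one obtains $M(t) = -\frac{1}{\sigma^2}\sum_{r=s+1}^{l} d(t_r)\operatorname{Ad}^*_{g(t_r)} J^Q(\mathrm{d}_1 d(t_r))$.

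The last step is to rewrite the summation range using the indicator $1_{t\leq t_s}$: for $t \in (t_s, t_{s+1})$, the condition $t \leq t_r$ is equivalent to $r \geq s+1$, so the two sums agree. Applying $\operatorname{Ad}^*_{g(t)^{-1}}$ to both sides then delivers \eqref{Thm_eq}. The endpoint cases $t=0$ and $t=t_l$ are handled separately: for $t=0$ one takes the right-limit of $M$ on $(0, t_1)$, for which all indicators equal one; for $t=t_l$, only $r=l$ contributes, and the formula reduces consistently to \eqref{node_3}.

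There is no real obstacle here — the argument is bookkeeping once the right quantity $M(t) = \operatorname{Ad}^*_{g(t)}\mu^0(t)$ is identified as the natural object to propagate. The only subtlety is ensuring that the indicator convention ($t\leq t_s$ rather than $t<t_s$) matches the correct side-limits at the nodes, so I would state at the outset that on each open subinterval the formula holds as written, and then verify the two endpoint values directly against \eqref{node_1} and \eqref{node_3}.
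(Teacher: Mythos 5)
Your proposal is correct and follows essentially the same route as the paper's proof: anchor the formula at $t=t_l$ via \eqref{node_3}, use the conservation of $\operatorname{Ad}^*_g\mu^0$ from \eqref{Conservation} on each open interval, and induct backward over the intervals, picking up the jump term from \eqref{node_1} at each node. Your write-up merely makes explicit what the paper leaves terse (naming $M(t)=\operatorname{Ad}^*_{g(t)}\mu^0(t)$, computing the jump of $M$, and checking the indicator convention and endpoint values), so no substantive difference remains.
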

\begin{proof}
  At final time $t = t_l$ \eqref{Thm_eq} clearly holds because of \eqref{node_3}. Since $\operatorname{Ad}^*_g\mu^0$ is conserved on open intervals it follows that for $t \in (t_s, t_{s+1})$,
\begin{align}
  \mu^0(t) = \operatorname{Ad}^*_{g(t)^{-1}}\operatorname{Ad}^*_{g(t_{s+1})}\mu^0(t_{s+1}^-). \notag
\end{align}
We can now obtain \eqref{Thm_eq} by induction over the open time intervals, noting that at each node $t = t_s$ a term $-\frac{d(t_s)}{\sigma^2} J^Q(\mathrm{d}_1 d(t_s))$ gets added on.
\end{proof}
In order to formulate the following corollary we use the notation $\mathfrak{g}_q$ for any point $q \in Q$ to denote the Lie algebra of the \emph{isotropy subgroup} of that point,  $G_q := \left\{ g \in G \big| gq = q\right\}$. In particular $\rho_Q(q) = 0$ for any $\rho \in \mathfrak{g}_q$.
\begin{corollary}\label{Corollary}
   For a solution of \eqref{open_1}--\eqref{node_4} we have, for $t$  in any of the open time intervals $(t_s, t_{s+1})$  as well as for $t \in \{0, t_l\}$,
  \begin{align}
    \left<\mu^0(t), \rho\right> = 0 \qquad \mbox{for all }\, \rho \in \mathfrak{g}_{q(t)}. \label{Thm}
  \end{align}
\end{corollary}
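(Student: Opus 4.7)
The plan is to derive the corollary directly from the closed-form expression for $\mu^0(t)$ furnished by Theorem, combined with the standard equivariance property of isotropy subalgebras under the adjoint action.

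First, I would take the pairing of both sides of \eqref{Thm_eq} with $\rho \in \mathfrak{g}_{q(t)}$ and use $\langle \operatorname{Ad}^*_g \alpha, \xi\rangle = \langle \alpha, \operatorname{Ad}_g \xi\rangle$ twice to rewrite
\begin{align*}
\langle \mu^0(t), \rho\rangle = -\frac{1}{\sigma^2} \sum_{s=1}^{l} 1_{t \leq t_s}\, d(t_s) \left\langle J^Q\bigl(\mathrm{d}_1 d(t_s)\bigr),\, \operatorname{Ad}_{k_s} \rho\right\rangle,
\end{align*}
where I abbreviate $k_s := g(t_s) g(t)^{-1}$. I would then invoke the defining property \eqref{Moma} of the cotangent-lift momentum map, noting that $\mathrm{d}_1 d(t_s) \in T^*_{q_s} Q$ with $q_s := g(t_s) Q_0$, so that each summand becomes $\bigl\langle \mathrm{d}_1 d(t_s),\, (\operatorname{Ad}_{k_s} \rho)_Q(q_s)\bigr\rangle$.

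The heart of the argument is then the observation that $q_s = k_s \cdot q(t)$, from which it follows that $\mathfrak{g}_{q_s} = \operatorname{Ad}_{k_s} \mathfrak{g}_{q(t)}$: indeed, if $\rho \in \mathfrak{g}_{q(t)}$, then $\exp(\varepsilon\, \operatorname{Ad}_{k_s} \rho) \cdot q_s = k_s \exp(\varepsilon \rho) k_s^{-1} \cdot k_s q(t) = k_s \exp(\varepsilon \rho) q(t) = k_s q(t) = q_s$. Differentiating at $\varepsilon = 0$ gives $(\operatorname{Ad}_{k_s} \rho)_Q(q_s) = 0$. Each term in the sum therefore vanishes and the corollary follows.

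The main obstacle is essentially bookkeeping: keeping the directions of $\operatorname{Ad}$ and $\operatorname{Ad}^*$ straight when transferring the pairing with $\rho$ through the two nested $\operatorname{Ad}^*$ factors in Theorem, and then ensuring that the resulting adjoint element sits in the isotropy subalgebra of the correct point (namely $q_s$, not $q(t)$). Once this is in place the equivariance identity $\mathfrak{g}_{k_s \cdot q(t)} = \operatorname{Ad}_{k_s}\mathfrak{g}_{q(t)}$ does the rest without further computation.
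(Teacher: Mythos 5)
Your proposal is correct and follows essentially the same route as the paper's proof: pair the formula of Theorem \ref{Theorem_m0} with $\rho$, move the two $\operatorname{Ad}^*$ factors across the duality pairing to obtain $\left<J^Q(\mathrm{d}_1 d(t_s)), \operatorname{Ad}_{g(t_s)}\operatorname{Ad}_{g(t)^{-1}}\rho\right>$, apply the momentum map identity \eqref{Moma}, and conclude from $\operatorname{Ad}_{g(t_s)g(t)^{-1}}\rho \in \mathfrak{g}_{q(t_s)}$ that each infinitesimal generator vanishes at $q(t_s)$. The only difference is cosmetic: the paper simply asserts this last membership, whereas you verify it explicitly through the conjugation identity $\exp\left(\varepsilon\, \operatorname{Ad}_{k_s}\rho\right) = k_s \exp(\varepsilon\rho) k_s^{-1}$ applied to $q_s = k_s\, q(t)$.
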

In Section \ref{Sec_Geom_Dis} we will develop a geometric algorithm that inherits an exact version of this Corollary. This implies that the numerical search for the optimal initial value of  $\mu^0$ can be  restricted to the subspace of $\mathfrak{g}^*$ that annihilates $\mathfrak{g}_{Q_0}$.
\begin{proof} For $t$ and $\rho$ as in the statement of the corollary  it follows from Theorem \ref{Theorem_m0} that
  \begin{align}
     \left<\mu^0(t), \rho\right> &=  -\frac{1}{\sigma^2} \sum_{s = 1}^{l} 1_{t\leq t_s} \left< d(t_s) J^Q(\mathrm{d}_1d(t_s)),  \operatorname{Ad}_{g_{t_s}} \operatorname{Ad}_{g(t)^{-1}}\rho\right> \notag \\
&=  -\frac{1}{\sigma^2} \sum_{s = 1}^{l} 1_{t \leq t_s} \left<d(t_s)\, \mathrm{d}_1d(t_s),  \left(\operatorname{Ad}_{g_{t_s}} \operatorname{Ad}_{g(t)^{-1}}\rho\right)_Q(q(t_s))\right> = 0, \notag 
  \end{align}
where we used \eqref{Moma} for the second equality and noted that $\operatorname{Ad}_{g_{t_s}}  \operatorname{Ad}_{g(t)^{-1}}\rho \in \mathfrak{g}_{q(t_s)}$ for the third.
\end{proof}
\subsection{Residual conservation law after partial symmetry breaking} A physically intuitive perspective on Corollary \ref{Corollary} is to understand it as a residual conservation law after partial symmetry breaking. We can see the sum in \eqref{Cost_LM} as the integral over a time-dependent \emph{potential function} $V: [0, t_l] \times G \to \mathds{R}$ given by
\begin{align}
  V(t, g) =  \frac{1}{2\sigma^2} \sum_{i = 1}^{l} \delta(t - t_i)\, d^2(g(t)Q_0, Q_{t_i}) \label{potential}
\end{align}
This potential produces instantaneous singular forces at node times $t_i$ which impart the jump discontinuities on the otherwise conserved momentum $ J = \operatorname{Ad}_g^*\mu^0$, 
\begin{align}
 J(t_s^+) =  J(t_s^-) + \frac{d(t_s)}{\sigma^2} \operatorname{Ad}^*_{g(t_s)}J^Q(\mathrm{d}_1d(t_s)). \label{Mom_jump}
\end{align}
 This is because the presence of this potential breaks the $G$-invariance of the variational problem, however a residual symmetry remains. Clearly, multiplication of $g$ from the right by an element $h \in G_{Q_0}$ leaves $V$ invariant.  An adaptation of the argument surrounding equation \eqref{Noether_var}, restricting $\nu$ to the subspace $\mathfrak{g}_{Q_0} \subset \mathfrak{g}$, then leads to
\begin{align}
  0 = \left.\left<\operatorname{Ad}_{g}^*\mu^0, \nu\right>\right|_0^t. \notag
\end{align}
for any $t \in [0, t_l]$. Moreover, \eqref{node_3} guarantees that $\left<\operatorname{Ad}_{g(t_l)}^*\mu^0(t_l), \nu\right> = 0$. Therefore, $\left<\operatorname{Ad}_{g(t)}^*\mu^0(t), \nu\right> = 0$ for any $t \in [0, t_l]$ and $\nu \in \mathfrak{g}_{Q_0}$, which is equivalent to Corollary \ref{Corollary}.
\section{Applications}\label{Sec-Applications}
In this section we discuss a number of examples that summon the inexact trajectory planning problem. We start with a discussion of Riemannian cubics on general Riemannian manifolds and specifically on Lie groups with invariant metrics, since this type of curve appears for a certain natural choice of Lagrangian. We then treat the rigid body, finite-dimensional quantum systems and molecular strands.
\subsection{Riemannian cubics}
Let $(M, \gamma)$ be a Riemannian manifold with metric $\gamma$, whose norm we denote by $\left\|. \right\|_\gamma$. The notion of straight lines in Euclidean space generalizes to geodesics in $M$. These are curves $x(t) \in M$ that satisfy $D_t\dot{x} = 0$. Here we introduce the notation $D_t\dot{x} = \nabla_{\dot{x}}\dot{x}$, where $\nabla$ is the Levi--Civita connection for $\gamma$. In local coordinates the geodesic equation is given by
\begin{align}
  \ddot{x}^k + \Gamma_{ij}^k\dot{x}^i\dot{x}^j = 0, \notag
\end{align}
where $\Gamma_{ij}^k$ are the Christoffel symbols of the Levi--Civita connection. We define the vector bundle isomorphism over the identity, $\flat: TM \to T^*M$, which maps $v \in T_xM$ to $v^\flat = \gamma(x)(v_x, \cdot)$. The inverse map is denoted by $\sharp$.

The physical interpretation of straight lines in Euclidean space follows from Newton's second law $\ddot{x} = F$. Here $F$ is an external force acting on a particle of unit mass that follows the trajectory $x(t)$. By analogy one can often understand $(D_t\dot{x})^\flat$ as a generalized force acting on a physical system whose time evolution is represented by a curve $x(t)$ in configuration space $M$. Consider the Lagrangian $L: T^{(2)}M \to \mathds{R}$,
\begin{align}\label{Lag_cubics}
  L(x, \dot{x}, \ddot{x}) = \frac{1}{2} \left\|D_t\dot{x}\right\|_\gamma^2, 
\end{align}
whose corresponding action functional  $S = \int_a^b L \, {\rm d}t$ measures the square of the $L^2$-norm of the external force. Hamilton's principle $\delta S = 0$ for variations with fixed boundary velocities $\dot{x}(a)$ and $\dot{x}(b)$ leads to the Euler--Lagrange equation \cite{NoHePa1989}
\begin{align}
  D_t^3 \dot{x} + R(D_t\dot{x}, \dot{x})\dot{x} = 0\,, \notag
\end{align}
where $R$ is the curvature tensor, $R(X, Y)Z := \nabla_X\nabla_YZ - \nabla_Y \nabla_X Z - \nabla_{[X, Y]}Z$ for any vector fields $X, Y, Z \in \mathfrak{X}(M)$. Solutions to this equation are called \emph{Riemannian cubics} and were introduced in \cite{NoHePa1989}. 

When the manifold is a Lie group $M = G$ we call a Riemannian metric \emph{right (or left) invariant} if $\gamma(g)(v_g, w_g) = \gamma(gh)(TR_h v_g, TR_h w_g)$, or  $\gamma(g)(v_g, w_g) = \gamma(hg)(TL_h v_g, TL_h w_g)$ respectively, for all $g,\, h \in G$ and $v_g,\, w_g \in T_gG$. If the metric is right (or left) invariant, then the Lagrangian \eqref{Lag_cubics} can be reduced to a function $\ell: 2\mathfrak{g} \to \mathds{R}$ \cite{Gay-BalmazEtAl2010}
\begin{align}
  \ell(\xi^0, \xi^1) = \frac{1}{2} \left\|\xi^1 \pm \operatorname{ad}^\dagger_{\xi ^0}\xi^0\right\|_\gamma^2, \label{Red_Lag_cubics}
\end{align}
where we introduced the operation $\operatorname{ad}^\dagger_{\xi}\rho = (\operatorname{ad}^*_\xi \rho^\flat)^\sharp$. This can be seen from the fact (e.g., \cite{Gay-BalmazEtAl2010}) that for a curve $g(t)$, whose right (or left) reduced velocity is given by $\xi(t) \in \mathfrak{g}$,
\begin{align}
  D_t\dot{g} = \left(\dot{\xi} + \operatorname{ad}^\dagger_\xi \xi\right)g, \quad \mbox{or} \quad  D_t\dot{g} = g\left(\dot{\xi} - \operatorname{ad}^\dagger_\xi \xi\right). \notag
\end{align}
In the following we will discuss a number of physical systems whose configuration spaces are Lie groups and whose equation of motion in the absence of external forces is given by $D_t\dot{g} = 0$ for some right (or left) invariant Riemannian metric. 
The Lagrangian \eqref{Red_Lag_cubics} is then one natural choice for $\ell$ in the inexact trajectory planning problem since optimal curves minimize (in the $L^2$ sense) the amount of external forcing necessary to achieve them.
It is clear from equations of motion \eqref{open_1}--\eqref{node_2} that the solution  $g(t)$ is a Riemannian cubic on open intervals, and twice continuously differentiable on the whole time interval $[0, t_l]$. Such curves are called \emph{Riemannian cubic splines}.
\begin{remark}\label{TrViRemark}
Without going into the mathematical details we point to a probabilistic interpretation of Riemannian cubics. See also \cite{TrVi2010}, where a closely related idea is discussed in the context of stochastic modeling of biological growth. 
Let $G$ be a Lie group with right-invariant metric $\gamma$ and let $e_i$, $i =1, \ldots, d$ be an orthonormal basis of the $d$-dimensional Lie algebra $\mathfrak{g}$. Consider a curve $g(t) \in G$, whose right-reduced velocity $\xi =TR_{g^{-1}} \dot{g}$ satisfies the following Ito stochastic differential equation
\begin{align}\label{SDE}
  {\rm d}\xi = - \operatorname{ad}^\dagger_{\xi}\xi \, {\rm d}t + \sigma_W \sum_{i = 1}^d {\rm d}W_i e_i, 
\end{align}
where $W_i$, $i = 1, \ldots, d$, are  independent Brownian motions (see, for example, \cite{Oks2003} Section 2.2, for a definition) and $\sigma_W \in \mathds{R}$. Suppose the (noisy) data is given in a vector space $V$ equipped with an inner product, whose norm we denote by $\|.\|_V$. The noise distribution is assumed to be Gaussian, that is, the probability density function has the form $p(Q) \sim \exp(-\frac{1}{2\sigma_n^2} \|Q - \bar{Q}\|_V^2)$, where $\bar{Q}$ is the true state of the system and $\sigma_n \in \mathds{R}$. Suppose experiments at times $t_i$, $i = 1,\ldots,l$ measuring the trajectory $g(t) Q_0$ have given results $Q_{t_i}$. Then the minimization of
\begin{align}
  S = \int_0^{t_l} \ell(\xi, \dot{\xi}) \, {\rm d}t + \frac{\sigma_W^2}{2\sigma_n^2}\sum_{i = 1}^l \left\|g(t_i)Q_0 - Q_{t_i}\right\|^2_V, \notag
\end{align}
with $\ell$ as in \eqref{Red_Lag_cubics}, can formally be understood as the maximization of the (logarithm of the) probability of the path $g(t)$, given the measurements. Alternative models of stochastic forcing will typically lead to minimization problems of the same type, but with different choices of $\ell$ (see Remark \ref{RB_stoch} below).
\end{remark} 
\subsection{Rigid body splines}\label{Sec-RB_splines}
Let the Lie group $G$ be the set of rigid rotations $SO(3)$, and let $Q$ be the unit sphere $S^2 \in \mathds{R}^3$. We use vector notation for the Lie algebra $\mathfrak{so}(3)$ of the Lie group of rotations $SO(3)$, as well as for its dual $\mathfrak{so}(3)^*$. One identifies $\mathfrak{so}(3)$ with $\mathds{R}^3$ via the familiar isomorphism
\begin{equation}\label{hat_map}
\,\widehat{\,}:  \mathds{R}^3\rightarrow \mathfrak{so}(3),\quad\mathbf{\Omega}=\left(
\begin{array}{c}
a\\
b\\
c\\
\end{array}\right) \mapsto \Omega:=\widehat{\mathbf{\Omega}}=\left(
\begin{array}{ccc}
0&-a&b\\
a&0&-c\\
-b&c&0
\end{array}
\right),
\end{equation}
called the \emph{hat map}.
This is a Lie algebra isomorphism when the vector cross product $\times$ is used as the Lie bracket operation on $\mathds{R}^3$. The identification of $\mathfrak{so}(3)$ with $\mathds{R}^3$ induces an isomorphism of the dual spaces $\mathfrak{so}(3)^* \cong \left(\mathds{R}^3\right)^* \cong \mathds{R}^3$, with the dot product as duality pairing. Let $\gamma$ be a left-invariant Riemannian metric on $SO(3)$. This defines an inner product on $\mathfrak{so}(3)$ which can be expressed as
\begin{align}
  \gamma_e(\mathbf{\Omega}_1, \mathbf{\Omega}_2) = \mathbf{\Omega}_1 \cdot \mathds{I}\mathbf{\Omega}_2 \notag
\end{align}
for a symmetric, positive definite matrix $\mathds{I}$.
The geodesic equation $D_t\dot{g} = 0$ in Euler--Poincar\'e form is
\begin{align}
  \dot{\mathbf{\Omega}} + \mathds{I}^{-1}(\mathbf{\Omega} \times \mathds{I}\mathbf{\Omega}) = 0, \qquad \dot{g} =g \widehat{\mathbf{\Omega}} . \label{EP_RB}
\end{align}
Consequently, the Lagrangian \eqref{Red_Lag_cubics} takes the form
\begin{align}
  \ell(\mathbf{\Omega}^0, \mathbf{\Omega}^1) &= \frac{1}{2} (\mathbf{\Omega}^1 + \mathds{I}^{-1}(\mathbf{\Omega}^0 \times \mathds{I}\mathbf{\Omega}^0)) \cdot \mathds{I} (\mathbf{\Omega}^1 + \mathds{I}^{-1}(\mathbf{\Omega}^0 \times \mathds{I}\mathbf{\Omega}^0)) \notag \\
&= \frac{1}{2} \left\|\mathbf{\Omega}^1 + \mathds{I}^{-1}(\mathbf{\Omega}^0 \times \mathds{I}\mathbf{\Omega}^0) \right\|^2_{\mathfrak{so}(3)}. \label{RB_Lag}
\end{align}
Consider the inexact trajectory planning problem in the left-action, left-reduction form. That is, suppose an initial point $\mathbf{x}_{0}$ and targets $\mathbf{x}_{t_1}, \ldots \mathbf{x}_{t_l} \in S^2$ are given, as well as a tolerance parameter $\sigma$. We seek the minimizer of
\begin{align}
  S[g, \mathbf{\Omega}^0, \mathbf{\Omega}^1, \boldsymbol{\mu}^0, \boldsymbol{\mu}^1] &= \int_{t_0}^{t_l} \frac{1}{2} \left\|\mathbf{\Omega}^1 + \mathds{I}^{-1}(\mathbf{\Omega}^0 \times \mathds{I}\mathbf{\Omega}^0)\right\|_{\mathfrak{so}(3)}^2 + \left<\boldsymbol{\mu}^0, g^{-1}\dot{g} - \mathbf{\Omega}^0\right> \notag \\  &\quad + \left<\boldsymbol{\mu}^1, \dot{\mathbf{\Omega}}^0 - \mathbf{\Omega}^1\right> + \frac{1}{2\sigma^2} \sum_{i = 1}^l \left\|g(t_i) \mathbf{x}_0 - \mathbf{x}_{t_i}\right\|^2. \label{RB_trajplan}
\end{align}
The physical interpretation is as follows. The group of rigid rotations, $SO(3)$, is the configuration manifold of a rigid body constrained to rotate around a fixed point.
In the absence of external torques the motion is governed by the geodesic equation \eqref{EP_RB}, where $\mathds{I}$ is the moment of inertia tensor (see, for example, \cite{Holm_Book2_2011}, Section 2.4). The resulting curve $g(t)$ describes the orientation of the rigid body relative to a space-fixed reference frame. 

Suppose the motion of a rigid body (with or without external torque) is partially observed in an experiment. At discrete times $t_i$ the direction of a particular body fixed axis is measured in the space-fixed frame, generating a sequence of outcomes $\mathbf{x}_{t_i} \in S^2$. Therefore, if $\mathbf{x}_0$ is the initial direction of the axis and $g(t)$ describes the rigid body motion, then $g(t_i)\mathbf{x}_0 - \mathbf{x}_{t_i} = 0$, up to measurement error. One would like to model the trajectory $g(t)$, taking into account this information. The action functional \eqref{RB_trajplan} encodes one such model, yielding the curve $g(t)$ of minimal external torque (in the $L^2$ sense) that is consistent with the experiment. A natural choice for the parameter $\sigma^2$ is to set it equal to the variance of the measurement. Example simulations are shown in  Figures \ref{sphere_fig} and  \ref{solid_fig}. These were generated by the numerical algorithm discussed in Section \ref{Sec_Geom_Dis}. 
\begin{SCfigure}
\centering
\includegraphics[scale=0.229]{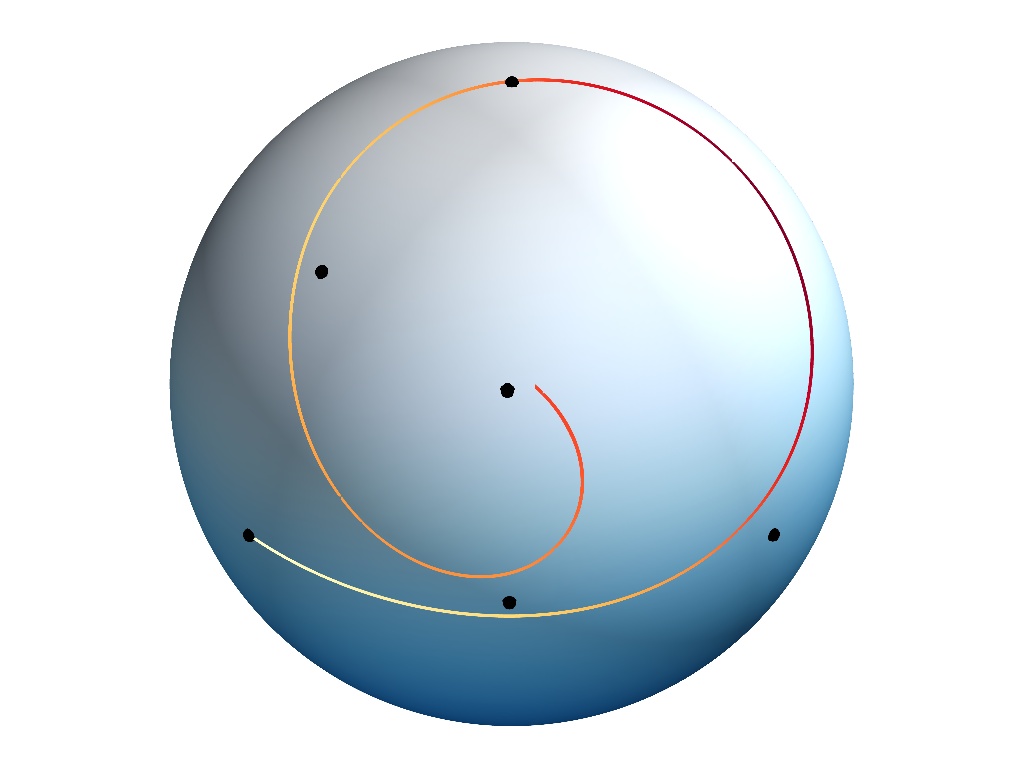}
\caption{\footnotesize Trajectory planning on the sphere through rigid rotations.  The moment of inertia tensor was taken to be the identity matrix and the tolerance parameter was set to $\sigma = 0.025$. The given data points are represented as black dots with uniform time separation. The curve shown is $g(t)\mathbf{x}_0$ for the optimal curve $g(t)$, generated using the algorithm developed in Section \ref{Sec_Geom_Dis}. The coloring represents the local speed along the curve in $SO(3)$, that is, $\|\mathbf{\Omega}^0\|_{\mathfrak{so}(3)}$ (red is large, white is small).}
\label{sphere_fig}
\end{SCfigure}
\begin{figure}
\centering
\includegraphics[scale=0.3]{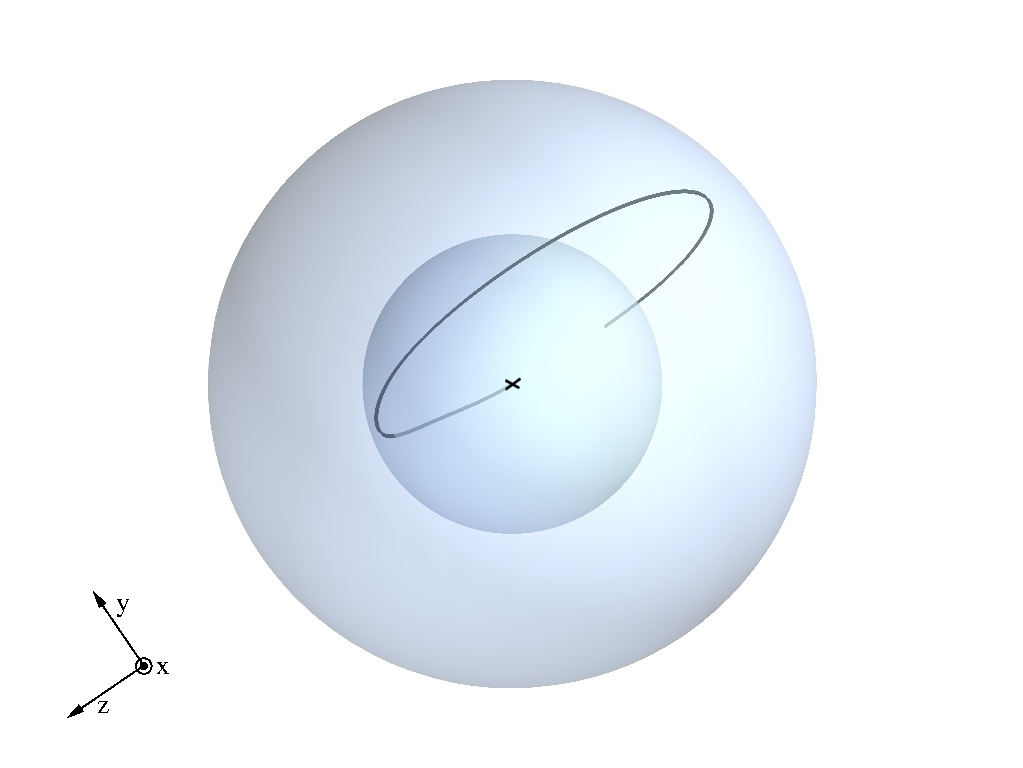}
\caption{\footnotesize Optimal curve $g(t)$ in the group $SO(3)$ of rigid rotations corresponding to the data points shown in Figure \ref{sphere_fig} and generated using the algorithm discussed in Section \ref{Sec_Geom_Dis}. The moment of inertia tensor was taken to be the identity matrix and the tolerance parameter was set to $\sigma = 0.025$. In this figure a given element of $SO(3)$ corresponds to a point on the radial line along the rotation axis, at a distance from the origin equal to the rotation angle. The radius of the inner sphere is $\pi$ and the radius of the outer sphere is $2\pi$. The center, marked by a cross, and the boundary of the outer sphere thus  both represent the identity matrix.}
 \label{solid_fig}
\end{figure}

If the observer is instead moving with a body fixed frame and measuring a space fixed direction, then $g(t_i)^{-1}\mathbf{x}_0 - \mathbf{x}_{t_i} = 0$, up to measurement error. In this case the inexact trajectory planning problem presents itself in the right-action, left-reduction form. Evidently, the formalism presented in this paper applies to any (sufficiently smooth) choice of Lagrangian. For example,
\begin{align}
  \ell(\mathbf{\Omega}^0, \mathbf{\Omega}^1) = \frac{1}{2} \mathbf{\Omega}^0 \cdot ( \mathds{I}\mathbf{\Omega}^1 + \mathbf{\Omega}^0 \times \mathds{I}\mathbf{\Omega}^0) \notag
\end{align}
leads to optimal curves $g(t)$ with minimal work done by external torques.
\begin{remark}\label{RB_stoch}
We mentioned in Remark \ref{TrViRemark} that the minimization of \eqref{RB_trajplan} is related to a certain inverse problem given the stochastic evolution \eqref{SDE}. Alternative stochastic models lead to forms of $\ell$ different from \eqref{RB_Lag}. For example,
\begin{align}
   {\rm d} {\mathbf{\Omega}}  = - \mathds{I}^{-1}(\mathbf{\Omega} \times \mathds{I}\mathbf{\Omega}) {\rm d}t + \sigma_W {\rm d}\mathbf{W}, \notag
\end{align}
where ${\rm d}\mathbf{W} = ({\rm d} W^1, {\rm d} W^2, {\rm d} W^3)^T$ is a vector of independent Brownian motions,
leads to
\begin{align}
  \ell(\mathbf{\Omega}^0, \mathbf{\Omega}^1) =  \frac{1}{2} \left\|\mathbf{\Omega}^1 + \mathds{I}^{-1}(\mathbf{\Omega}^0 \times \mathds{I}\mathbf{\Omega}^0) \right\|^2 \notag
\end{align}
with $\|.\|$ being the  Euclidean norm.
\end{remark}
\subsection{Quantum splines}\label{Sec_QS}
In this section we give an overview of a quantum mechanical application presented in \cite{BrHoMe2012}, where more details can be found. We consider an $n+1$ level quantum system with Hilbert space ${\mathcal H}={\mathds C}^{n+1}$. Quantum state space is the $n$-dimensional complex projective space $\mathds{CP}^n$, which is defined as ${\mathds C}^{n+1} - \left\{0\right\}$ modulo the equivalence relation $|\psi\rangle \sim \lambda |\psi\rangle$ for any complex number $\lambda \neq 0$. The standard geodesic distance is given by
\begin{align}
  d(\psi, \phi) = 2 \arccos \sqrt{\frac{\langle \psi | \phi \rangle \langle \phi | \psi \rangle
}{\langle \psi|\psi\rangle \langle \phi | \phi \rangle}}. \notag
\end{align}
Here we used Dirac notation to write $|\psi\rangle$ for a vector in $\mathds{C}^{n+1}$ and $\langle \psi |$ for its Hermitian conjugate. The evolution of a quantum state is given by the \emph{Schr\"odinger equation}
\begin{eqnarray}
\frac{\rd}{\rd t}|\psi\rangle = -\ri H |\psi\rangle, \notag
\end{eqnarray}
 where the Hamiltonian operator $H$ is a Hermitian matrix. Equivalently one may express this as a differential equation on the unitary group $U(n+1)$ by
\begin{eqnarray}
\dot{U} = -\ri H U, \quad 
|\psi(t)\rangle = U(t) |\psi(0)\rangle. \label{Schrodinger_2}
\end{eqnarray}
One may assume without loss of generality that the anti-Hermitian matrix $\ri H$ is of zero trace, therefore $U(t)$ is in the special unitary group $SU(n+1)$. This is due to the fact that the trace contributes a complex phase factor to the state evolution, which can be neglected in projective terms. Notice that $-\ri H$ is skew-Hermitian and trace free and therefore lies in the Lie algebra $\mathfrak{su}(n+1)$. 

Suppose a time dependent Hamiltonian $H(t)$ is controlled in an experiment whose goal it is to steer a system from initial state $|\psi_0\rangle$ through states $|\psi_{t_j}\rangle$ at times $t_j$, for $j = 1, \ldots, l$. One would like to achieve this trajectory with least change to the experimental apparatus. We formalize this requirement using an inner product on $\mathfrak{su}(n+1)$,
\begin{align}
  \gamma_e(A, B) = - 2 \tr(A B).
\end{align}
 The cost we associate with a time dependent Hamiltonian $H(t)$ is $\int \frac{1}{2} \gamma_e(\dot{H}, \dot{H}) \, {\rm d}t$. We note that $\gamma_e$ extends to a bi-invariant Riemannian metric $\gamma$ on $SU(n+1)$, which means in particular that $\operatorname{ad}^\dagger = -\operatorname{ad}$. The Lagrangian $\ell(H^0, H^1) = \frac{1}{2} \gamma_e(H^1, H^1)$ is therefore equal to the reduced Lagrangian \eqref{Red_Lag_cubics} for the metric $\gamma$ on $SU(n+1)$. The total cost functional takes the left-action, right-reduction form
\begin{align}
  S[U, \ri H^0, \ri H^1, M^0, M^1] &= \int_{0}^{t_l} \tr(H^1 H^1) + \left<M^0, \dot{U}U^{-1} - \ri H^0\right> + \left<M^1, \ri \dot{H}^0 - \ri H^1\right> \, {\rm d}t \notag \\
&\quad + \frac{1}{2\sigma^2} \sum_{i = 1}^l d^2(U(t_i) \psi_0, \psi_{t_i}). \notag
\end{align}
The tolerance parameter $\sigma$ can be used to trade off quality of matching against a reduced cost associated with change in the Hamiltonian over time. For more details, see \cite{BrHoMe2012}.
\subsection{Macromolecular configurations}\label{Macro}
Equilibrium configurations of macromolecular structures and of DNA in particular can be modelled using the classical theory of elastic rods. See \cite{DNASupercoil, KiChi2006} for examples of this approach. In this section we formulate an inexact trajectory planning problem in this context.

We start by describing how the configuration of an elastic rod can be described by a position curve $\mathbf{r}(s)$ in $ \mathds{R}^3$ and a curve $R(s)$ in the group of rigid rotations, $ SO(3)$. Here, $s \in [0, 1]$ parameterizes the cross sections of the rod along its length, whereby for a given value of $s$ the vector $\mathbf{r}(s)$ points to the center of mass of the respective cross section, as seen in the lab frame.
Let $\mathbf{e}_i(s)$, $i = 1,\, 2,\, 3$ be an orthonormal frame such that $\mathbf{e}_1(s)$ and $\mathbf{e}_2(s)$  point along the principal axes of the moment of inertia tensor of the cross section. We will refer to this frame as the \emph{body-fixed} frame. $R(s)$ is the rotation that transforms the initial frame at $s = 0$  (which we assume to coincide with the lab frame) to the body-fixed frame at $s$. Therefore the configuration of a macromolecule can be described by a curve $g(s) = (R(s), \mathbf{r}(s))$ in the special Euclidean group, $SE(3)$, originating at the identity. The group multiplication rule of $SE(3)$ is
\begin{align}
  (R_1, \mathbf{r}_1) (R_2, \mathbf{r}_2) = (R_1R_2, R_1\mathbf{r}_2 + \mathbf{r}_1) \notag.
\end{align}
The Lie algebra $\mathfrak{se}(3)$ consists of elements $(\Omega, \mathbf{v})$ with $\Omega \in \mathfrak{so}(3)$ and $\mathbf{v} \in \mathds{R}^3$. Applying the inverse of the hat map \eqref{hat_map} to $\Omega$ we can represent $\mathfrak{se}(3)$ as $\mathds{R}^6$. The $\operatorname{ad}$ operation becomes
\begin{align}
  \operatorname{ad}_{\boldsymbol{\xi}_1} \boldsymbol{\xi}_2 = \operatorname{ad}_{(\mathbf{\Omega}_1, \mathbf{v}_1)} (\mathbf{\Omega}_2, \mathbf{v}_2) = (\mathbf{\Omega}_1 \times \mathbf{\Omega}_2,\, \mathbf{\Omega}_1 \times \mathbf{v}_2 - \mathbf{\Omega}_2 \times \mathbf{v}_1). \notag
\end{align}
If we identify the dual $\mathfrak{se}(3)^*$ with $\mathds{R}^6$ using the standard dot product as duality pairing, then
\begin{align}
  \operatorname{ad}^*_{(\boldsymbol{\Omega}, \mathbf{v})} (\boldsymbol{\mu}, \mathbf{a}) = (-\mathbf{\Omega} \times \boldsymbol{\mu} - \mathbf{v} \times \mathbf{a}, - \mathbf{\Omega} \times \mathbf{a}). \label{ad_star}
\end{align}
 The \emph{body-fixed velocity} vector pertaining to a configuration $(R(s), \mathbf{r}(s))$ is defined as
\begin{align}
  \xi = g^{-1} \dot{g} = (R^{-1} \dot{R}, R^{-1} \dot{\mathbf{r}}) = (\Omega, R^{-1}\dot{\mathbf{r}}) \notag
\end{align}
where a superscript $\cdot$ means derivation with respect to $s$. In vector notation we can set $\mathbf{v} = R^{-1}\dot{\mathbf{r}}$ so that $\boldsymbol{\xi} = (\mathbf{\Omega}, \mathbf{v}) \in \mathds{R}^6$. 

A macromolecule that is experimentally constrained to assume a configuration with given final rotation and displacement $g(1) = (R(1), \mathbf{r}(1))$ will relax into an equilibrium state that minimizes potential energy with respect to all possible configurations respecting the constraint. For the case of DNA the authors of \cite{KiChi2006} propose to model this effect using the Lagrangian $L: TSE(3) \to \mathds{R}$ whose left-reduced form $l: \mathfrak{se}(3) \to \mathds{R}$ is given by
\begin{align}
  l(\boldsymbol{\xi}) = \frac{1}{2} (\boldsymbol{\xi} - \mathbf{z}) \cdot K (\boldsymbol{\xi} - \mathbf{z}). \notag
\end{align}
The $6\times 6$ matrix $K$ is symmetric and positive definite and encodes the various stiffness properties. The double helix structure of DNA means that the equilibrium configuration for unconstrained end points retains a number $n$ of rotations along its length. This is expressed by the vector
\begin{align}
  \mathbf{z} = \left(\begin{array}{c} 2 \pi n \mathbf{e}_z \\ \mathbf{e}_z\end{array}\right),
\end{align}
where we assume without loss of generality that the equilibrium configuration for unconstrained end points is oriented along the spatial $z$-axis and has unit length. 

In the case of constrained final rotation and displacement $g(1) =  (R(1), \mathbf{r}(1))$ the equilibrium configuration minimizes the action functional $S = \int_0^1 l(\boldsymbol{\xi})\, {\mathrm d}s$. Hamilton's principle $\delta S = 0$ leads to Euler--Poincar\'e equations
\begin{align}
  \dot{\boldsymbol{\xi}} = K^{-1}\operatorname{ad}^*_{\boldsymbol{\xi}}K (\boldsymbol{\xi} - \mathbf{z}) = \operatorname{ad}^\dagger_{\boldsymbol{\xi}}(\boldsymbol{\xi} - \mathbf{z}), \label{Strand_EP}
\end{align}
with $\operatorname{ad}^*$ operation given by \eqref{ad_star} and the operation $\operatorname{ad}^\dagger$ defined by $\operatorname{ad}^\dagger_{\boldsymbol{\xi}_1}\boldsymbol{\xi}_2 := K^{-1} \operatorname{ad}^*_{\boldsymbol{\xi}_1} K \boldsymbol{\xi}_2$. This equation of motion can be used to design second order Lagrangians to model non-equilibrium states of the DNA. For example, let us set
\begin{align}
  \ell(\boldsymbol{\xi}^0, \boldsymbol{\xi}^1) &= \frac{1}{2} (\boldsymbol{\xi}^1 - \operatorname{ad}^\dagger_{\boldsymbol{\xi}^0}(\boldsymbol{\xi}^0 - \mathbf{z})) \cdot K (\boldsymbol{\xi}^1 - \operatorname{ad}^\dagger_{\boldsymbol{\xi}^0}(\boldsymbol{\xi}^0 - \mathbf{z}))\notag \\ &= \frac{1}{2} \left\|\boldsymbol{\xi}^1 - \operatorname{ad}^\dagger_{\boldsymbol{\xi}^0}(\boldsymbol{\xi}^0 - \mathbf{z})\right\|^2_{K}. \notag
\end{align}
Suppose an experiment measures the position of the center of mass $\mathbf{r}(s_i)$ at a number of parameter values $s_i$ ($i = 1, \ldots, l$) as well as a body fixed direction, say $\mathbf{e}_3 (s_i)$. The space of measurement outcomes is $Q = S^2 \times \mathds{R}^3 \in \mathds{R}^6$ with $SE(3)$ action given by
\begin{align}
  (R, \mathbf{r}) (\widehat{\mathbf{x}}, \mathbf{y}) = (R \widehat{\mathbf{x}}, R\mathbf{y} + \mathbf{r}). \notag
\end{align}
If the measurements yield the sequence $(\widehat{\mathbf{x}}_{s_i}, \mathbf{y}_{s_i})$ ($ i = 1, \ldots, l)$ this suggests that up to measurement error the configuration $g(s) \in SE(3)$ satisfies
  \begin{align}
    g(s_i)(\mathbf{e}_3(0), \mathbf{0}) = (\widehat{\mathbf{x}}_{s_i}, \mathbf{y}_{s_i}). \notag
  \end{align}
The task of modelling the configuration $g(s)$ can then be cast in the form of an inexact trajectory planning problem in the left-action, left-reduction form with cost functional
\begin{align}
  S[g, \boldsymbol{\xi}^0, \boldsymbol{\xi}^1, \boldsymbol{\mu}^0, \boldsymbol{\mu}^1] &= \int_0^{t_l}\frac{1}{2} \left\|\boldsymbol{\xi}^1 - \operatorname{ad}^\dagger_{\boldsymbol{\xi}^0}(\boldsymbol{\xi}^0 - \mathbf{z})\right\|^2_{K} + \left<\boldsymbol{\mu}^0, g^{-1}\dot{g} - \boldsymbol{\xi}^0\right> + \left<\boldsymbol{\mu}^1, \dot{\boldsymbol{\xi}}^0 - \boldsymbol{\xi}^1\right> \, {\rm d}s \notag \\
&\quad + \frac{1}{2 \sigma^2} \sum_{i= 1}^l \left\|  g(s_i)(\mathbf{e}_3(0), \mathbf{0}) - (\widehat{\mathbf{x}}_{s_i}, \mathbf{y}_{s_i})\right\|^2. \notag
\end{align}
\begin{remark}
Due to the anisotropy in velocity space, expressed by the vector $\mathbf{z}$, the Euler--Poincar\'e equation \eqref{Strand_EP} is \emph{not} the reduced geodesic equation for the curve $g(t)$ with respect to the metric defined by $K$. Consequently, the solution to the inexact trajectory matching problem is not a Riemannian cubic spline.
\end{remark}
\section{Geometric discretization}\label{Sec_Geom_Dis}
The purpose of this section is to illustrate how the higher-order Hamilton--Pontryagin principle offers a direct route towards geometric numerical integrators.
  All one needs to do, in essence, is to provide a geometric discretization of the constraint $TR_{g^{-1}}\dot{g} - \xi^0 = 0$ and define a discrete Hamilton--Pontryagin principle accordingly. For first order variational problems this idea was introduced in \cite{BRMa09}, building on the general theory of variational integrators (see \cite{MaWe01} for an extensive review). We follow in the footsteps of  \cite{BRMa09} to treat second order problems. Third and higher orders can be dealt with in a similar fashion.

Our main motivation for the development of a geometric integrator lies with the exact momentum behaviour exhibited by the discrete solution curves. Not only does the momentum conservation on open intervals \eqref{Conservation} translate exactly into the discrete picture, but the behavior at the nodes is given by discrete versions of the continuous time node equations \eqref{node_1}--\eqref{node_4}. As a consequence, one can obtain discrete analogues of Theorem \ref{Theorem_m0} and Corollary \ref{Corollary}. This means that the numerical search for the optimal initial value of the momentum $\mu^0$ can be restricted to a linear subspace of $\mathfrak{g}^*$ of the same dimension as the data manifold $Q$. As we shall see below, the variational nature of the integrator also means that the discrete flow map preserves the canonical symplectic form on $T^*(TG)$.
\subsection{A geometric integrator}
In discrete mechanics the time axis $[t_0, t_l]$ is replaced by a set of discrete time points $t_k = t_0 + kh$, $k = 0, \ldots , N$, where $h$ is the step size and $t_l = t_0 + Nh$. We use integers $N_i$, $i = 1, \ldots, l$, as node indices $t_i = t_0 + N_ih$. For convenience we also define $N_0 := 0$. We will need a map $\tau: \mathfrak{g} \to G$ that approximates the Lie exponential and is an analytic diffeomorphism in a neighbourhood of $0$ with $\tau(0) = e$ as well as $\tau(\xi) \tau(-\xi) = e$ for all $\eta \in \mathfrak{g}$. An example is the \emph{Cayley transform}, which is a second-order approximation of the Lie exponential in quadratic matrix Lie groups. This includes the applications discussed in Section \ref{Sec-Applications}. The Cayley transform is defined as
\begin{align}
  \tau(\xi) = (e - \xi/2)^{-1} (e + \xi/2). \notag
\end{align}
This and further examples are discussed in \cite{BRMa09}. 

Since $\tau$ is an approximate of the Lie exponential, a simple way of discretizing the constraint $TR_{g^{-1}}\dot{g} - \xi^0 = 0$ is to require that $g_{k+1} = \tau(h\xi_k^0) g_k$, where $h$ is the size of a time step. Similarly one may translate $\dot{\xi}^0 = \xi^1$ to $\xi^0_{k+1} = \xi^0_k + h \xi_k^1$. With these considerations in mind we define a discretized version of the action functional \eqref{Cost_LM} on discrete path space $\mathcal{C}_d$. Let
\begin{align}
  \mathcal{C}_d = \left\{ g_0, \xi_0^0, \xi^1_0, (g_k, \xi^0_k, \xi^1_k, \check{\mu}^0_k, \mu^1_k)_{k=1}^{N}  \right\} = (G \times 2\mathfrak{g}) \times (G \times 2\mathfrak{g} \times 2 \mathfrak{g}^*)^N, \notag
\end{align}
then we define $S_d: \mathcal{C}_d \to \mathds{R}$ as
\begin{align}
  S_d &= h \left[\sum_{k=0}^{N-1} \ell(\xi^0_k, \xi^1_k) + \left< \check{\mu}_{k+1}^0, \frac{1}{h} \tau^{-1}(g_{k+1}g_k^{-1}) - \xi^0_k\right> + \left<\mu^1_{k+1}, \frac{1}{h}(\xi^0_{k+1} - \xi^0_k) - \xi^1_k\right>\right] \notag \\
&\quad + \frac{1}{2\sigma^2} \sum_{i=1}^l d^2(g_{N_i}Q_0, Q_{t_i}). \label{mismatch_penalty}
\end{align}
In analogy to continuous time we assume that  $g_0 = e$ and $\xi_0^0$ are given. 

The discrete Euler--Lagrange equations follow from Hamilton's principle $\delta S_d = 0$. That is, they characterize paths $\gamma \in \mathcal{C}_d$, for which $ \delta S_d := \delta \gamma(S) = 0$ for all variations $\delta \gamma \in T_{\gamma}\mathcal{C}_d$ with $\delta g_0 = 0$ and $\delta \xi_0^0 = 0$. In the process of computing $\delta S_d$ we need to calculate $\delta \tau^{-1}(g_{k+1}g_k^{-1})$. For that purpose it is convenient to introduce the left-trivialized differential of $\tau$ at $\xi \in \mathfrak{g}$,
     \begin{equation}
       D\tau_\xi: \mathfrak{g} \to \mathfrak{g}, \quad \eta \mapsto \tau(\xi)^{-1} \left(T_{\xi}\tau(\eta)\right), \notag
     \end{equation}
whose inverse we denote by $D\tau_\xi^{-1}$. By taking a derivative of $\tau(\xi)\tau(-\xi) = e$ one can show that \cite{BRMa09}
\begin{align}\label{Dtau_identity}
   D\tau_{\xi}^{-1} = D\tau_{-\xi}^{-1}\circ \operatorname{Ad}_{\tau(\xi)}. 
\end{align}
Denoting $\eta_k := (\delta g_{k}) g_k^{-1}$ we find
\begin{align}
   \delta \tau^{-1}(g_{k+1}g_k^{-1}) &= T_{\tau(h\xi^0_k)}\tau^{-1}(\eta_{k+1}\tau(h\xi^0_k)) - T_{\tau(h\xi^0_k)}\tau^{-1}(\tau(h\xi^0_k) \eta_k)\notag \\
&= T_{\tau(h\xi^0_k)}\tau^{-1}(\tau(h\xi^0_k) \operatorname{Ad}_{\tau(-h\xi^0_k)}\eta_{k+1}) - T_{\tau(h\xi^0_k)}\tau^{-1}(\tau(h\xi^0_k) \eta_k) \notag \\
&= D\tau_{h\xi^0_k}^{-1} ( \operatorname{Ad}_{\tau(-h\xi^0_k)}\eta_{k+1)}) -  D\tau_{h\xi^0_k}^{-1}( \eta_k)\notag \\
&=  D\tau_{-h\xi^0_k}^{-1} (\eta_{k+1}) -   D\tau_{h\xi^0_k}^{-1}( \eta_k), \notag
\end{align}
where in the last equality we used \eqref{Dtau_identity}. 
Introducing the quantities
\begin{align} \label{Defs_0}
  \mu^1_0 := \mu^1_1 + h\check{\mu}^0_1 - h \frac{\delta \ell}{\delta \xi^0_0}\,, \qquad \mu_0^0 := (D\tau^{-1}_{h\xi_0^0})^* \check{\mu}^0_1
\end{align}
and
\begin{align}
  \mu^0_k := (D\tau_{-h\xi_{k-1}^0}^{-1})^*\check{\mu}_k^0 \qquad (k = 1, \ldots, N), 
\end{align}
we obtain, after rearranging terms,
\begin{align}
  \delta S_d &= h\left[\sum_{k = 1}^{N-1} \left< \frac{\delta \ell}{\delta \xi_k^0} - \check{\mu}^0_{k+1} + \frac{1}{h} \mu^1_k - \frac{1}{h} \mu^1_{k+1}, \delta \xi^0_k\right> + \left< \frac{\delta \ell}{\delta \xi_k^1} - \mu^1_{k+1}, \delta \xi^1_k\right> \right.  \notag\\
 &\quad \left.+ \left<\frac{1}{h} \mu^0_k - \frac{1}{h} (D\tau^{-1}_{h\xi^0_k})^* (D\tau_{-h\xi^0_k})^*\mu^0_{k+1}\right> + \left<\delta \check{\mu}^0_{k+1}, \ldots\right> + \left<\delta \mu^1_{k+1}, \ldots \right>\right] \notag \\
&\quad + h\left<\frac{\delta \ell}{\delta \xi^1_0} - \mu^1_1, \delta \xi^1_0\right> + h \left<\delta \check{\mu}_1, \ldots\right> + h\left<\delta \mu_1^1, \ldots \right> + \left<\mu^1_N, \delta \xi^0_N\right>  - \left<\mu^1_0, \delta \xi_0^0\right> \notag \\
&\quad + \left< \mu^0_N, \eta_N\right> - \left<\mu^0_0, \eta_0\right> 
+ \frac{1}{\sigma^2} \sum_{i=1}^l \left<d_{N_i} J^Q({\rm{d}}_1d_{N_i}), \eta_{N_i}\right>\,, \notag
 \end{align}
where we used abbreviations $d_{N_i} = d(g_{N_i}Q_0, Q_{t_i})$ and ${\rm{d}}_1d_{N_i} := {\rm{d}}_1d(g_{N_i}Q_0, Q_{t_i})$. The Euler--Lagrange equations are therefore composed of the following equations. The constraints
\begin{align} \label{constraints}
  g_{k+1} = \tau(h\xi^0_k) g_k, \qquad \xi^0_{k+1} = \xi^0_k + h\xi_k^1 \qquad (k = 0, \ldots, N-1),
\end{align}
the discrete equations for the Legendre--Ostrogradsky momenta
\begin{align} 
  &\mu^1_{k+1} = \mu^1_k -h (D\tau_{-h\xi^0_{k}})^*\mu^0_{k+1} + h\frac{\delta \ell}{\delta \xi^0_k} \qquad (k = 1, \ldots, N-1) \label{Ostro} \\
&\frac{\delta \ell}{\delta \xi_k^1} - \mu^1_{k+1} = 0 \qquad (k = 0, \ldots, N-1), \label{LegTr_highest}
\end{align}
the discrete version of the Euler--Poincar\'e equation for interior indices $k \neq N_i$ $(i = 1, \ldots, l)$
\begin{align}
  \mu^0_{k+1} = (D\tau^{-1}_{-h\xi^0_k})^* (D\tau_{h\xi^0_k})^* \mu^0_k \label{EP_interior}
\end{align}
and for node indices $k = N_i$ $(i = 1, \ldots, l-1)$
\begin{align}
  \mu^0_{k+1} = (D\tau^{-1}_{-h\xi^0_k})^* (D\tau_{h\xi^0_k})^* \left(\mu^0_k + \frac{d_k}{\sigma^2} J^Q({\rm{d}}_1d_{k})\right). \label{EP_node}
\end{align}
Finally,
\begin{align}
  &\mu^0_N +  \frac{d_N}{\sigma^2} J^Q({\rm{d}}_1d_{N}) = 0, \label{End_mu0}\\
  &\mu^1_N = 0. \label{End_mu1}
\end{align}
A solution $\gamma \in \mathcal{C}_d$ to \eqref{constraints}--\eqref{EP_node} is said to have \emph{initial conditions} $(g_0, \xi^0_0, \mu^0_0, \mu^1_0) \in G \times \mathfrak{g} \times 2\mathfrak{g}^* \cong T^*(TG)$, using the definitions \eqref{Defs_0}. If the Lagrangian $\ell$ is hyperregular, then \eqref{LegTr_highest} can be solved for $\xi^1_k$. This means that $\xi^1_k$ can be eliminated from equations \eqref{constraints}--\eqref{EP_node}, which can subsequently be integrated for given initial conditions. If in addition equations \eqref{End_mu0} and \eqref{End_mu1} are satisfied, then $\gamma$ is a critical point of the action functional $S_d$.
\begin{remark}
In Remark \ref{Rem_1} we mentioned that besides the above left-action, right-reduction case, three other cases were available to be considered. In a similar manner to what we observed in that remark the right-action, right-reduction case introduces changes to equations \eqref{EP_node} and \eqref{End_mu0}. These become
\begin{align}
   &\mu^0_{k+1} = (D\tau^{-1}_{-h\xi^0_k})^* (D\tau_{h\xi^0_k})^* \left(\mu^0_k - \frac{d_k}{\sigma^2}\operatorname{Ad}^*_{g_k^{-1}} J^Q({\rm{d}}_1d_{k})\right) \quad \mbox{and} \notag \\
&\mu^0_N -  \frac{d_N}{\sigma^2} \operatorname{Ad}^*_{g_N^{-1}}J^Q({\rm{d}}_1d_{N}) = 0, \notag
\end{align}
respectively. The remaining two cases (left-action, left-reduction; and right-action, left-reduction) are equivalent to the first two in  just the same way as explained in Remark \ref{Rem_1}.
\end{remark}
\subsection{Geometric properties}
As in Section \ref{HOHP_and_geometry_of_mult} the interior equations are conveniently analyzed by omitting the mismatch penalty term \eqref{mismatch_penalty} from the action functional, so that
\begin{align}
 \mathcal{J}_d &= h \left[\sum_{k=0}^{N-1} \ell(\xi^0_k, \xi^1_k) + \left< \check{\mu}_{k+1}^0, \frac{1}{h} \tau^{-1}(g_{k+1}g_k^{-1}) - \xi^0_k\right> + \left<\mu^1_{k+1}, \frac{1}{h}(\xi^0_{k+1} - \xi^0_k) - \xi^1_k\right>\right] \notag
\end{align}
 The arguments that surround equation \eqref{Symplectic_arg} and show symplecticity of the continuous time flow can then be applied in a straightforward manner to the discrete case. Indeed, interior equations \eqref{constraints}--\eqref{EP_interior} define a flow map $F_d: G \times \mathfrak{g} \times 2\mathfrak{g}^* \to  G \times \mathfrak{g} \times 2\mathfrak{g}^*$, which integrates a solution $\gamma$ for given initial conditions. That is, $(F_d)^k(g_0, \xi^0_0, \mu^0_0, \mu^1_0) = (g_k, \xi^0_k, \mu^0_k, \mu^1_k)$ or more succinctly $(F_d)^k(\gamma_0) = \gamma_k$. We restrict $\mathcal{J}_d$ to solutions of \eqref{constraints}--\eqref{EP_interior} and express it as a function $\mathcal{J}_{d, \mbox{initial}}: T^*(TG) \to \mathds{R}$ of initial conditions $\gamma_0 \in T^*(TG)$. This means that if $\gamma \in \mathcal{C}_d$ is the solution obtained by integrating $\gamma_0$ then ${J}_{d, \mbox{initial}}(\gamma_0) = \mathcal{J}_d(\gamma)$. It follows that
 \begin{align}
  {\rm d}\mathcal{J}_{d, \mbox{initial}} (\delta \gamma_0) = \left<\mu^1_N, \delta \xi^0_N\right>  - \left<\mu^1_0, \delta \xi_0^0\right> + \left< \mu^0_N, \eta_N\right> - \left<\mu^0_0, \eta_0\right> = (((F_d)^k)^* \theta - \theta) (\delta \gamma_0), \notag
 \end{align}
where $\theta$ is the canonical one-form on $T^*(TG)$. Taking an exterior derivative shows that the canonical symplectic form $\omega = {\rm d}\theta$ is preserved by $F_d$. Hence the discrete flow map $F_d$ is symplectic.

Similarly the observations given in Section \ref{Section_MCNT} can be translated to the discrete picture. Indeed we pointed out in the paragraph of equation \eqref{Noether_var} how to obtain the conservation of the momentum $J_k = \operatorname{Ad}^*_g \mu^0$ from a variational perspective. These arguments can be applied to the discrete variational principle to show that $\operatorname{Ad}^*_{g_{k+1}} \mu^0_{k+1} = \operatorname{Ad}^*_{g_k} \mu^0_k$ for interior indices $k \neq N_i$. Alternatively, a manipulation of equation \eqref{EP_interior} using \eqref{Dtau_identity} shows that
\begin{align}
   \mu^0_{k+1} = (D\tau^{-1}_{-h\xi^0_k})^* (D\tau_{h\xi^0_k})^* \mu^0_k = \operatorname{Ad}^*_{\tau(-h\xi_k^0)}\mu^0_k = \operatorname{Ad}^*_{g_k g_{k+1}^{-1}} \mu^0_k = \operatorname{Ad}^*_{g_{k+1}^{-1}} \operatorname{Ad}^*_{g_k} \mu^0_k, \notag
\end{align}
and therefore $\operatorname{Ad}^*_{g_{k+1}} \mu^0_{k+1} = \operatorname{Ad}^*_{g_k} \mu^0_k$.
\begin{remark}
  The symplectic flow equations on open intervals can equivalently be discretized in the purely Lagrangian picture by following \cite{CoJiMdD2012}. One chooses a suitable discrete Lagrangian $L_d: G \times G \times G \to \mathds{R}$ and applies variational calculus to the discrete action sum
  \begin{align}
    S_d = \sum_{k=0}^{N-2}L_d(g_k, g_{k+1}, g_{k+2}). \notag
  \end{align}
Let us define $\xi: G\times G \to \mathfrak{g}$ by $\xi(g_1, g_2) = h^{-1} \tau^{-1}\left(g_2g_1^{-1}\right)$ and set
\begin{align}
  L_d(g_k, g_{k+1}, g_{k+2}) := h \ell(\xi(g_k, g_{k+1}), h^{-1}(\xi(g_{k+1}, g_{k+2})- \xi(g_{k}, g_{k+1}))). \notag
\end{align}
Boundary conditions being equal, the resulting optimal curve $(g_0, \ldots, g_N)$ is the same as the one we obtained from the discrete Hamilton--Pontryagin principle. The Hamilton--Pontryagin principle has an advantage in situations where more sophisticated discretizations of the constraint $TR_{g^{-1}}\dot{g} = \xi^0$ are chosen. For example, in the preliminary study \cite{BuHoMe2011} Runge--Kutta--Munthe--Kaas methods were used to introduce a class of such integrators. Those integrators can still be understood in the purely Lagrangian framework, however the definition of the corresponding function $L_d(g_k, g_{k+1}, g_{k+2})$ is implicit in that evaluating it requires solving a variational problem. The Hamilton--Pontryagin approach circumvents this difficulty by building the discretization of the constraint into the variational principle from the outset.
\end{remark}
The node equation \eqref{EP_node} reflects in a geometrically consistent way the jump discontinuities of $\operatorname{Ad}^*_g\mu^0$ given in \eqref{Mom_jump}. Indeed, \eqref{EP_node} says that
\begin{align}
  J_{k+1} = J_k + \frac{d_k}{\sigma^2} \operatorname{Ad}^*_{g_k} J^Q({\rm{d}}_1d_{k})\notag
\end{align}
when $k = N_i$. Moreover, the final time conditions \eqref{End_mu0} and \eqref{End_mu1} are exact analogues of \eqref{node_3} and \eqref{node_4}.  See Figure \ref{mom_behaviour} for an example.
\begin{figure}[htb]
\begin{center}
\includegraphics[scale=0.7]{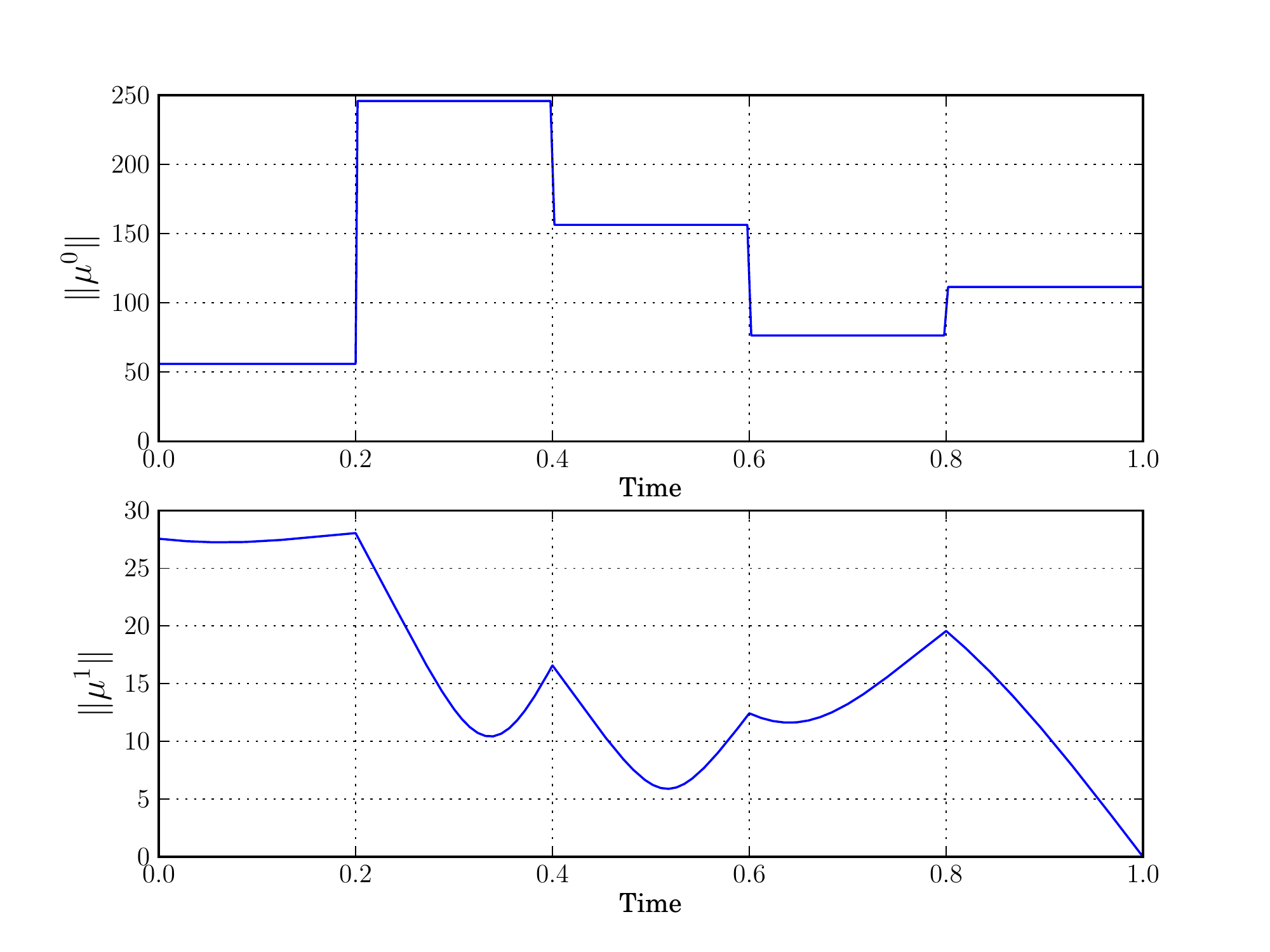}
\end{center}
\caption{\footnotesize Momentum norms. For the interpolating discrete cubic of Figure \ref{solid_fig}, the plot shows the norms of the momenta $\mu^0_k$ and $\mu^1_k$. The norm of $\mu_k$ displays the momentum discontinuities at node indices as well as exact conservation at interior indices, in accordance with \eqref{EP_interior} and \eqref{EP_node}. The norm of $\mu^1_k$ demonstrates continuity, as found in \eqref{Ostro}. Both curves respect terminal conditions \eqref{End_mu0} and \eqref{End_mu1}.}
\label{mom_behaviour}
\end{figure}
 Putting everything together leads to discrete versions of Theorem \ref{Theorem_m0} and Corollary \ref{Corollary}. The proofs are analogous to the continuous time case.
\begin{theorem} \label{Theorem_m0_discrete}
  For $k = 0, \ldots, N$
  \begin{align} \notag
    \mu^0_k = -\frac{1}{\sigma^2} \operatorname{Ad}^*_{g_k^{-1}} \left(\sum_{i = 1}^{l} 1_{k \leq N_i} d_{N_i} \operatorname{Ad}^*_{g_{N_i}} J^Q(\mathrm{d}_1d_{N_i})\right). 
  \end{align}
\end{theorem}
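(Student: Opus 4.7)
The plan is to proceed by backward induction on $k$, descending from $k = N$ down to $k = 0$. It will be convenient to work with the Noether-type quantity $J_k := \operatorname{Ad}^*_{g_k}\mu^0_k$, the discrete analogue of the conserved momentum of Section~\ref{Section_MCNT}, so that the claimed identity is equivalent to
\begin{equation*}
J_k = -\frac{1}{\sigma^2} \sum_{i=1}^{l} 1_{k \leq N_i}\, d_{N_i}\, \operatorname{Ad}^*_{g_{N_i}} J^Q(\mathrm{d}_1 d_{N_i}).
\end{equation*}
Once this has been established, acting by $\operatorname{Ad}^*_{g_k^{-1}}$ on both sides recovers the formula for $\mu^0_k$ stated in the theorem.

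For the base case $k = N = N_l$, the terminal condition \eqref{End_mu0} reads $\mu^0_N = -d_N\, J^Q(\mathrm{d}_1 d_N)/\sigma^2$, so $J_N = -d_N\,\operatorname{Ad}^*_{g_N} J^Q(\mathrm{d}_1 d_N)/\sigma^2$. Since $1_{N \leq N_i}$ vanishes for $i < l$ and equals one for $i = l$, this matches the right-hand side at $k = N$.

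For the inductive step, suppose the formula holds at $k+1$; I would distinguish two cases. If $k$ is an interior index, i.e.\ $k \notin \{N_1, \ldots, N_{l-1}\}$, then the discrete conservation law $\operatorname{Ad}^*_{g_{k+1}}\mu^0_{k+1} = \operatorname{Ad}^*_{g_k}\mu^0_k$ derived in the paragraphs preceding the theorem gives $J_k = J_{k+1}$, and no indicator $1_{\cdot \leq N_i}$ changes value between $k+1$ and $k$, so the formula passes through unchanged. If instead $k = N_i$ for some $i \in \{1, \ldots, l-1\}$, then \eqref{EP_node}, together with the same $\operatorname{Ad}^*$ manipulation of $(D\tau^{-1}_{-h\xi^0_k})^*(D\tau_{h\xi^0_k})^*$ used for interior indices, yields the discrete jump
\begin{equation*}
J_{k+1} = J_k + \frac{d_{N_i}}{\sigma^2}\operatorname{Ad}^*_{g_{N_i}} J^Q(\mathrm{d}_1 d_{N_i}),
\end{equation*}
equivalently $J_k = J_{k+1} - d_{N_i}\operatorname{Ad}^*_{g_{N_i}} J^Q(\mathrm{d}_1 d_{N_i})/\sigma^2$. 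Passing from index $k+1$ to $k = N_i$ activates exactly the indicator $1_{k \leq N_i}$ (which was zero at $k+1$, since $N_i + 1 > N_i$), producing precisely the extra summand required by the inductive hypothesis.

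There is no serious obstacle: the argument is structurally identical to the proof of Theorem~\ref{Theorem_m0}, with the continuous-time integration replaced by backward recursion on discrete indices. The only item requiring mild care is the bookkeeping of the indicator $1_{k \leq N_i}$ as $k$ crosses a node, and one must verify that the new contribution turns on at $k = N_i$ rather than at $k = N_i + 1$, which is exactly the convention needed to match the direction of the jump in \eqref{EP_node}.
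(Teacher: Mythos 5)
Your proposal is correct and is precisely the argument the paper intends: the paper proves Theorem \ref{Theorem_m0_discrete} only by asserting it is ``analogous to the continuous time case,'' and the continuous proof of Theorem \ref{Theorem_m0} is exactly your backward induction --- base case from the terminal condition (there \eqref{node_3}, here \eqref{End_mu0}), conservation of $\operatorname{Ad}^*_{g_k}\mu^0_k$ between nodes (here via \eqref{EP_interior} and the $\operatorname{Ad}^*$ identity derived just before the theorem), and the jump \eqref{EP_node} adding one term per node. Your bookkeeping of the indicator $1_{k\leq N_i}$ turning on at $k=N_i$ is the correct convention and matches the direction of the discrete jump, so nothing is missing.
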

\begin{corollary}\label{Corollary_discrete}
  For $k = 0, \ldots, N$
  \begin{align}
    \left<\mu^0_k, \rho\right> = 0 \qquad \mbox{for all }\, \rho \in \mathfrak{g}_{g_kQ_0}. \notag
  \end{align}
\end{corollary}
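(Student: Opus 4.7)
The plan is to mirror the continuous-time proof of Corollary \ref{Corollary} verbatim in the discrete setting, using the explicit formula for $\mu^0_k$ provided by Theorem \ref{Theorem_m0_discrete} as the starting point. Since Theorem \ref{Theorem_m0_discrete} writes $\mu^0_k$ as a weighted sum over future nodes of terms of the form $\operatorname{Ad}^*_{g_k^{-1}} \operatorname{Ad}^*_{g_{N_i}} J^Q(\mathrm{d}_1 d_{N_i})$, one only has to check that each individual summand annihilates $\rho \in \mathfrak{g}_{g_k Q_0}$.

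First I would pair $\mu^0_k$ with such a $\rho$ and move the adjoint actions onto $\rho$ by duality, obtaining a sum of terms of the form
\begin{align*}
\bigl\langle d_{N_i} J^Q(\mathrm{d}_1 d_{N_i}),\, \operatorname{Ad}_{g_{N_i}} \operatorname{Ad}_{g_k^{-1}} \rho \bigr\rangle.
\end{align*}
Next I would invoke the defining property \eqref{Moma} of the cotangent-lift momentum map to rewrite each such pairing as
\begin{align*}
\bigl\langle d_{N_i}\,\mathrm{d}_1 d_{N_i},\, \bigl(\operatorname{Ad}_{g_{N_i}} \operatorname{Ad}_{g_k^{-1}} \rho\bigr)_Q(g_{N_i} Q_0) \bigr\rangle.
\end{align*}

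The crux of the argument, and the step that needs verification, is that the infinitesimal generator above vanishes. This reduces to the algebraic fact that the isotropy subalgebras satisfy $\operatorname{Ad}_h \mathfrak{g}_q = \mathfrak{g}_{hq}$ for any $h\in G$, $q\in Q$: if $\rho\in \mathfrak{g}_{g_k Q_0}$ then $\operatorname{Ad}_{g_k^{-1}}\rho\in \mathfrak{g}_{Q_0}$, and therefore $\operatorname{Ad}_{g_{N_i}}\operatorname{Ad}_{g_k^{-1}}\rho\in \mathfrak{g}_{g_{N_i} Q_0}$. Since by definition elements of $\mathfrak{g}_{g_{N_i} Q_0}$ have vanishing infinitesimal generator at $g_{N_i} Q_0$, each summand is zero, and the corollary follows.

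The only mild technical point to be careful about is the boundary case $k = N$, where the sum in Theorem \ref{Theorem_m0_discrete} consists of the single term $d_N J^Q(\mathrm{d}_1 d_N)$ with $g_{N_i} = g_N = g_k$; here the argument still goes through since $\operatorname{Ad}_{g_N}\operatorname{Ad}_{g_N^{-1}}\rho = \rho \in \mathfrak{g}_{g_N Q_0}$ by assumption. No genuine obstacle is anticipated, as the discrete machinery (adjoint actions, the momentum map, and isotropy) is identical to the continuous case; the geometric discretization has been designed precisely so that this transfer is immediate.
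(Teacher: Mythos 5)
Your proposal is correct and takes essentially the same approach as the paper: the paper states only that the discrete proofs are ``analogous to the continuous time case,'' and your argument reproduces exactly the continuous-time proof of Corollary \ref{Corollary} --- pairing the formula of Theorem \ref{Theorem_m0_discrete} with $\rho$ by duality, applying the momentum map identity \eqref{Moma}, and using $\operatorname{Ad}_{g_{N_i}}\operatorname{Ad}_{g_k^{-1}}\rho \in \mathfrak{g}_{g_{N_i}Q_0}$ to kill each summand. Your separate check of the boundary case $k = N$ is a harmless extra verification not needed beyond the general argument.
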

\subsection{Practical remarks}\label{Remarks}
The integrator derived above provides a way of finding a numerical solution to the inexact trajectory planning problem. 

The discrete equations of motion \eqref{constraints}--\eqref{EP_node} can be employed to express the action functional $\mathcal{J}_d$ as a function  $\mathcal{J}_{d, \mbox{initial}}: T^*(TG) \to \mathds{R}$ of initial conditions $\gamma_0 = (g_0, \xi_0^0, \mu^0_0, \mu^1_0) \in T^*(TG) \cong G \times \mathfrak{g} \times 2\mathfrak{g}^*$. The minimizer in the space of initial conditions can then be determined by a gradient descent method. Since $g_0 = e$ and $\xi_0^0$ are given, the minimization is in effect only over the variables $(\mu_0^0, \mu_0^1)$. By Corollary \ref{Corollary_discrete} the optimal $\mu_0^0$ lies in the subspace of $\mathfrak{g}^*$ that annihilates $\mathfrak{g}_{Q_0}$, to which the search can therefore be restricted. On the other hand one still needs to consider all of $\mathfrak{g}^*$ for the optimization of $\mu_0^1$.

The gradient of $\mathcal{J}_{d, \mbox{initial}}$ can be estimated via finite-difference methods. However, this requires the repeated forward integration of  \eqref{constraints}--\eqref{EP_node}. The number of such integrations increases with the number of dimensions of the Lie group $G$, and for higher dimensional systems this quickly becomes unfeasible. Such difficulties can be circumvented using the standard method of \emph{adjoint equations}, which can be easily implemented for the geometric discretization presented here (a detailed derivation is provided in the Appendix). Then the \emph{exact} gradient is obtained at the cost of integrating twice (once forward and once backward) a system of equations of the same complexity as the forward equations.

The significance of the exact preservation of final time constraints \eqref{node_3} and \eqref{node_4} in the form of \eqref{End_mu0}, \eqref{End_mu1} is to provide verification that a (local) minimum has been found.
When the tolerance is tight ($\sigma$ is small) and in the absence of a good initial guess it may occur that the algorithm tends to a local minimum rather than the global one. Suitable initial guesses can be computed using a homotopy strategy. This means a step-by-step reduction of $\sigma$, where the optimum at one value of $\sigma$ is taken as the initial guess at the next smaller value. 

We used this algorithm to generate the figures in this paper. For simulations of quantum splines (see Section \ref{Sec_QS}) using the same methods we refer to \cite{BrHoMe2012}.
\section{Discussion and Outlook}
\paragraph{Discussion.} 
This paper has discussed a type of inexact trajectory planning problem whose optimal curves are required to pass near a sequence of fixed target positions at designated times within a certain tolerance. In Section \ref{Sec_Geometry_of_TP} a new derivation of the Euler--Lagrange equations for this type of problem was obtained by applying reduction by symmetry to a higher--order Hamilton--Pontryagin principle. This approach provided a new geometric interpretation of the previously known node equations in terms of Legendre--Ostrogradsky momenta. The highest order momentum was seen to undergo discontinuous jumps at the node times as a consequence of a partially broken Lie group symmetry. This was the content of Theorem \ref{Theorem_m0} and Corollary \ref{Corollary}. In Section \ref{Sec-Applications} several applications of the theory were discussed, which summoned the inexact trajectory planning problem both from a control theoretic viewpoint (quantum splines) as well as in the context of a type of inverse problem (rigid body splines, macromolecular configurations). Finally, Section \ref{Sec_Geom_Dis} was concerned with the numerical approach to solving the problem at hand. The reduced Hamilton--Pontryagin principle was taken as the starting point and a geometric discretization of the Euler--Lagrange equations was obtained, which led to exact momentum behavior and discrete versions of both Theorem \ref{Theorem_m0} and Corollary \ref{Corollary}. This meant in particular that the search for the optimal initial value of the highest order momentum could be restricted to a subspace of the dual of the Lie algebra of the Lie group $G$, whose action describes the motion.

\paragraph{Outlook.} This work invites further development in several directions. First, one can show that the discrete flow map derived in Section \ref{Sec_Geom_Dis} is accurate only to first order in step size $h$. The development of geometric methods with a higher degree of accuracy would be desirable. The class of integrators presented in the preliminary study \cite{BuHoMe2011} may prove useful in this regard. An alternative possibility is the Lagrangian approach of \cite{CoJiMdD2012} together with suitably exact approximations of the Lagrangian function. Second, the final step in the numerical optimization of the cost functional was based on a shooting method with gradient descent in the space of initial conditions. A comparison with more sophisticated methods of nonlinear programming would be a useful guide for further development. Third, the example of the molecular strand (Section \ref{Macro}) was solved as a problem of statics. Adding the consideration of dynamics of the strand brings one into the realm of so-called $G$-Strands \cite{HoIvPe2012}, for which the inexact trajectory planning problem may prove interesting. Finally, for applications in computational anatomy one must deal with infinite dimensional diffeomorphism groups. Extending the framework of this paper to infinite dimensions is therefore another challenge that lies ahead.

\paragraph{Acknowledgements}
We thank M. Bruveris, F. Gay-Balmaz, H. O. Jacobs, M. Leok, L. Noakes, T. S. Ratiu, J. Vankerschaver and F.-X. Vialard for several enlightening discussions of this material. DDH thanks the Royal Society for a Wolfson Research Merit Award and the European Research Council for Advanced Grant 267382.
\appendix
\section{Gradient calculation via adjoint equations}\label{App_AdEq}
In order to implement an efficient descent method for $\mathcal{J}_{d, \mbox{initial}}$ it is useful to have an expression for its gradient. One may obtain a gradient estimate using finite difference methods. One drawback lies with the inaccuracies inherent in the estimation. Moreover, if the dimension of the Lie algebra $\mathfrak{g}$ is large such estimations quickly become computationally costly. Both of these drawbacks can be circumvented in our case by the use of adjoint equations. In this way we obtain an exact expression for the gradient of  $\mathcal{J}_{d, \mbox{initial}}$, in a computationally efficient way. We will derive the system of adjoint equations now. For simplicity we treat only the special case where $\ell = \frac{1}{2} \left\|\xi^1\right\|^2_\gamma$, where $\gamma$ denotes an inner product on $\mathfrak{g}$ and $\|.\|$ the corresponding norm. Moreover we will only treat the left-action, right-reduction case, however the others can be obtained in the same way. We recall from \eqref{constraints}--\eqref{EP_node} the equations of motion  
\begin{align}
  &g_{k+1} = \tau(h\xi^0_k) g_k, \quad \xi^0_{k+1} = \xi^0_k + h(\mu^1_{k+1})^\sharp \label{eq_App_1}\\
  &\mu^1_{k+1} = \mu^1_k -h (D\tau_{-h\xi^0_{k}})^*\mu^0_{k+1}, \label{eq_App_2}\\
  &\mu^0_{k+1} = (D\tau^{-1}_{-h\xi^0_k})^* (D\tau_{h\xi^0_k})^* \left(\mu^0_k + \Delta_k(g_kQ_0)\right),\label{eq_App_3}
\end{align}
where we introduce functions $\Delta_k: Q \to \mathfrak{g}^*$ for $k = 0, \ldots, N$ defined as
\begin{align}
  \Delta_{N_i}(q) :=  \frac{d_{N_i}}{\sigma^2} J^Q({\rm{d}}_1 d(g_{N_i}Q_0, Q_{t_l}) \notag
\end{align}
when $k \in \{N_1, \ldots, N_l\}$ and $\Delta_k = 0$ otherwise. 

 Let us define an augmented functional $\mathcal{G}$, in which these equations are paired with Lagrange multipliers. These Lagrange multipliers will be denoted $(P_k^0, P^1_k, V^0_k, V^1_k) \in 2 \mathfrak{g}^* \times 2\mathfrak{g}$ for $k = 1, \ldots, N$. Let us introduce the shorthand notation $x$ representing the discrete path $(g_k, \xi^0_k, \mu_k^0, \mu^1_k)_{k=0}^N$ and $\lambda$ representing the ensemble of Lagrange multiplier $(P_k^0, P^1_k, V^0_k, V^1_k)_{k=1}^N$. The augmented functional $\mathcal{G}$ is given by
 \begin{align}
   \mathcal{G}(x, \lambda) &= h \Bigg[ \sum_{k=0}^{N-1} \frac{1}{2}\left\|(\mu_{k+1}^1)^\sharp\right\|_\gamma^2 + \left<P^0_{k+1}, \tau^{-1}(g_{k+1}g_k^{-1}) - h \xi^0_k\right> \notag \\
&\quad + \left<P^1_{k+1}, \xi^0_{k+1} - \xi^0_k - h (\mu^1_{k+1})^\sharp\right>  + \left<\mu^1_{k+1}-\mu^1_k + h(D\tau_{-h\xi^0_k})^*\mu^0_{k+1}, V^0_{k+1}\right> \notag \\
&\quad + \left<(D\tau_{-h\xi^0_k})^*\mu^0_{k+1} - (D\tau_{h\xi^0_k})^*(\mu^0_k + \Delta_k(g_kQ_0)), V^1_{k+1}\right> \Bigg] + \frac{1}{2 \sigma^2}\sum_{i=1}^l d^2(g_{N_i}Q_0, Q_{t_i}).\notag
 \end{align}
No constraints are assumed here, apart from the prescribed initial velocity $\xi^0_0$ and $g_0 = e$. It is clear that for any choice of Lagrange multipliers $\lambda$ we have $\mathcal{G}(x, \lambda) = \mathcal{J}_{d, \mbox{initial}}(\mu_0^0, \mu_0^1)$, as long as $x$ satisfies \eqref{eq_App_1}--\eqref{eq_App_3} for given initial values $\mu_0^0, \, \mu_0^1$. A tedious, but straightforward calculation shows that
\begin{align}
  \delta \mathcal{G}(x, \lambda) = -h \left<\delta \mu_0^1, V^0_1\right> - h \left<\delta \mu_0^0, D\tau_{h\xi_0^0}V_1^1\right>, \label{Vars}
\end{align}
if $x$ satisfies  \eqref{eq_App_1}--\eqref{eq_App_3} \emph{and} $\lambda$ is a solution of the \emph{adjoint equations}. We describe these now. We introduce functions $K^{\pm}: 2\mathfrak{g} \times \mathfrak{g}^* \to \mathfrak{g}^*$ by the defining relation
\begin{align}
  \left<K^{\pm}_{\xi, \mu}V, \rho\right> = \left.\frac{{\rm d}}{{\rm d}\varepsilon}\right|_{\varepsilon = 0} \left<(D\tau_{\pm h(\xi + \varepsilon \rho)})^*\mu, V\right>, \quad \mbox{for all } \xi,\, V,\, \rho \in \mathfrak{g},\, \mu \in \mathfrak{g}^*. \notag
\end{align}
Moreover, for $k= 0, \ldots, N$ we define functions $\mathcal{A}_k: Q \times \mathfrak{g} \to \mathfrak{g}^*$ by
\begin{align}
  \left<\mathcal{A}_k(q, \rho), \eta\right> =  \left.\frac{{\rm d}}{{\rm d}\varepsilon}\right|_{\varepsilon = 0} \left<\Delta_k(\exp(\varepsilon \eta) q), \rho\right>, \notag
\end{align}
for all $q \in Q$ and $\rho,\, \eta \in \mathfrak{g}$. The adjoint equations consist of conditions at the final time point,
\begin{align}
  &P^0_N = -h^{-1} (D\tau_{-h\xi^0_{N-1}})^* \Delta_N(g_NQ_0), \quad P^1_N = 0, \label{Init_1} \\
& V_N^0 = - (\mu_N^1)^\sharp, \quad V^1_N = -h V^0_N, \label{Init_2}
\end{align}
and the following equations for $k = 1,\ldots, N-1$,
\begin{align}
  &P^0_k = (D\tau_{-h\xi^0_{k-1}})^*\left[(D\tau^{-1}_{h\xi^0_k})^*P^0_{k+1} + \mathcal{A}_k(g_kQ_0, D\tau_{h\xi^0_k}V^1_{k+1}) - h^{-1}\Delta_k(g_kQ_0)\right]\label{Back_1}\\
&P^1_k = P^1_{k+1} + hP^0_{k+1} - hK^-_{\xi^0_k, \mu^0_{k+1}}V^0_{k+1} - K^-_{\xi^0_k, \mu^0_{k+1}}V^1_{k+1} + K^+_{\xi^0_k, \mu^0_k + \Delta_k(g_kQ_0)}V^1_{k+1} \label{Back_2}\\
  &V^0_k = V^0_{k+1} - (\mu^1_k)^\sharp + h(P^1_k)^\sharp, \label{Back_3}\\
  &V^1_k = -hV_k^0 + D\tau^{-1}_{-h\xi^0_{k-1}}D\tau_{h\xi^0_k}V^1_{k+1} \label{Back_4}.
\end{align}
These equations are posed backwards. That is, solving the adjoint equations entails initialising the Lagrange multipliers at time point $N$ according to \eqref{Init_1}--\eqref{Init_2} and then iterating backwards from $k = N$ to $k = 1$ using \eqref{Back_1}--\eqref{Back_4}.

We now obtain an expression for the gradient of $\mathcal{J}_{d, \mbox{initial}}$ from \eqref{Vars}. Indeed, let $(\mu_0^0(\varepsilon), \mu_0^1(\varepsilon))$ be a variation of initial conditions $(\mu_0^0, \mu_0^1)$, and let $x(\varepsilon)$ be the corresponding set of solutions to \eqref{eq_App_1}--\eqref{eq_App_3}. Let $\lambda$ be a solution to the adjoint equations \eqref{Back_1}--\eqref{Back_4} for $x = x(0)$, then
\begin{align}
  \delta \mathcal{J}_{d, \mbox{initial}} &=  \left.\frac{{\rm d}}{{\rm d}\varepsilon}\right|_{\varepsilon = 0} \mathcal{J}_{d, \mbox{initial}}(\mu_0^0(\varepsilon), \mu_0^1(\varepsilon)) =  \left.\frac{{\rm d}}{{\rm d}\varepsilon}\right|_{\varepsilon = 0} \mathcal{G}(x(\varepsilon), \lambda) \notag \\
&=  -h \left<\delta \mu_0^1, V^0_1\right>   - h \left<\delta \mu_0^0, D\tau_{h\xi_0^0}V_1^1\right> . \notag
\end{align}
From this we can read off the gradient,
\begin{align}
  \frac{\delta \mathcal{J}_{d, \mbox{initial}}}{\delta \mu_0^0} =  - h D\tau_{h\xi_0^0}V_1^1, \qquad   \frac{\delta \mathcal{J}_{d, \mbox{initial}}}{\delta \mu_0^1} = -h  V^0_1. \notag
\end{align}

\newpage
{\bibliographystyle{alpha}
\newcommand{\etalchar}[1]{$^{#1}$}
}

\end{document}